\newtheorem{propo}{Proposition}[section]
\newtheorem{corol}[propo]{Corollary}
\newtheorem{theor}[propo]{Theorem}
\newtheorem{lemma}[propo]{Lemma}
\theoremstyle{definition}
\newtheorem{defin}[propo]{Definition}
\theoremstyle{remark}
\newtheorem{remar}[propo]{Remark}
\newcommand{\algo}[6]
{\vspace{11pt}\addtocounter{propo}{1}
\noindent{\bf Algorithm \arabic{section}.\arabic{propo}.}
{\bf #1}(#2)\\ {\it #3}.\\
{\bf Input:} #4\\
{\bf Output:} #5\\
\newcounter{#1}
\begin{list}{\textbf{\arabic{#1}.}}{\usecounter{#1}}
#6\end{list}\vspace{3pt}}
\numberwithin{equation}{section}
\newcommand{\NN }{\mathbb{N}}
\newcommand{\RR }{\mathbb{R}}
\newcommand{\ZZ }{\mathbb{Z}}
\newcommand{\Ac }{\mathcal{A}}
\newcommand{\Bc }{\mathcal{B}}
\newcommand{\Kc }{\mathcal{K}}
\newcommand{\Wc }{\mathcal{W}}
\newcommand{\Cc }{\mathcal{C}}
\DeclareMathOperator{\Aut}{Aut}
\DeclareMathOperator{\Hom}{Hom}
\DeclareMathOperator{\GL}{GL}
\newcommand{\IF }{\mathcal{IF}}
\DeclareMathOperator{\Der}{Der}
\DeclareMathOperator{\pdeg}{pdeg}
\newcommand{\frto}{\eta}
\newcommand{\rsC }{\mathcal{R}}
\newcommand{\re }{^\mathrm{re}}
\newcommand{\rer }[1]{(R\re)^{#1}}
\title[Crystallographic arrangements are inductively free]
{Coxeter and crystallographic arrangements are inductively free}
\author{M.~Barakat}
\address{Mohamed Barakat,
Fachbereich Mathematik,
Universit\"at Kaiserslau\-tern,
Postfach 3049,
D-67653 Kai\-sers\-lau\-tern, Germany}
\email{barakat@mathematik.uni-kl.de}
\author{M.~Cuntz}
\address{Michael Cuntz,
Fachbereich Mathematik,
Universit\"at Kai\-sers\-lau\-tern,
Postfach 3049,
D-67653 Kaiserslautern, Germany}
\email{cuntz@mathematik.uni-kl.de}
\begin{document}

\begin{abstract}
Using the classification of finite Weyl groupoids we prove that crystallographic arrangements, a large subclass of the class of simplicial arrangements which was recently defined, are hereditarily inductively free. In particular, all crystallographic reflection arrangements are hereditarily inductively free, among them the arrangement of type $E_8$. With little extra work we prove that also all Coxeter arrangements are inductively free.
\keywords{arrangement of hyperplanes \and Coxeter \and reflection \and inductively free}
\end{abstract}

\maketitle

\section{Introduction}

A hyperplane arrangement $\Ac$ is called free if the module of $\Ac$-derivations
$D(\Ac)$ is free.
If $\Ac$ is ``generic'', i.e.\ no additional information about the arrangement is known,
then one can prove that it is free and construct a free basis for $D(\Ac)$ by intensive
use of \textsc{Gröbner} basis techniques.

Fortunately, for certain arrangements there is a purely combinatorial method
to prove their freeness\footnote{\textsc{Terao} even conjectured that the freeness of an arrangement,
over a fixed field, only depends on the intersection lattice, and is hence a purely combinatorial notion.}
based on the Addition-Theorem \cite[Thm.\ 4.50]{OT}:
For a triple $(\Ac,\Ac',\Ac'')$ of arrangements
where $\Ac=\Ac'\cup\{H\}$, $\Ac''=\Ac^H$, the arrangement $\Ac$ is free if
$\Ac'$, $\Ac''$ are free and $\exp \Ac''\subseteq\exp \Ac'$. This theorem naturally
leads to the stronger notion of {\it inductive freeness}: The empty
arrangement is inductively free, and $\Ac$ is inductively free if there is a
triple $(\Ac,\Ac',\Ac'')$ as above where $\Ac',\Ac''$ are inductively free.
So inductive freeness implies freeness, but the converse is false \cite[4.59]{OT}.

A very important class of arrangements is the class of reflection arrangements.
There is an elegant invariant theoretic proof (see e.g.\ \cite[Thm.\ 6.60]{OT})
that reflection arrangements are free. In fact, it is conjectured
\cite[Conj.\ 6.90, Conj.\ 6.91]{OT} that reflection arrangements are inductively
free or even {\it hereditarily} inductively free.
\textsc{Orlik} and \textsc{Terao} \cite{OT2} proved that
\textsc{Coxeter} arrangements are hereditarily free.

Here we prove the inductive freeness for the following large class of simplicial arrangements:
Let $\Ac $ be a simplicial arrangement and let $R$ be a
set of nonzero covectors such that $\Ac =\{\alpha ^\perp \,|\,\alpha \in R\}$.
Assume that $\RR\alpha \cap R=\{\pm\alpha\}$ for all $\alpha\in R$.
The pair $(\Ac ,R)$ is called {\it crystallographic}, see \cite[Def.\ 2.3]{p-C10},
if for any chamber $K$ the elements of $R$ are integer linear combinations of
the covectors defining the walls of $K$. For example, all reflection arrangements
from Weyl groups\footnote{Notice that for example the arrangements of type $B_n$
and $C_n$ are not isomorphic as crystallographic arrangements because of the additional
datum $R$.} are crystallographic arrangements.
In rank greater than two the arrangements of type $H_3$ and $H_4$ are the only
\textsc{Coxeter} arrangements which are not crystallographic.

In this paper we prove that crystallographic arrangements are hereditarily
inductively free.
We treat the sporadic cases with the computer:
The algorithm is mainly based upon the fact that the roots of a finite \textsc{Weyl}
groupoid are real roots and that the finite \textsc{Weyl} groupoids are in one-to-one
correspondence with the crystallographic arrangements \cite{p-C10}.
For the non-crystallographic \textsc{Coxeter} arrangements of type $H_3$ and $H_4$ we
use a generic version of the algorithm.

The paper is organized as follows.
In Section \ref{prel} we recall all necessary definitions following \cite{OT}.
In Section \ref{gen_res} we briefly describe the classification of crystallographic
arrangements \cite{p-CH10}. In the next section we give a detailed description of the infinite series
and prove that their arrangements are inductively free.
Section \ref{wg_sporadic} treats the sporadic cases using the computer.
This is the most difficult part, in particular the arrangement of type $E_8$
requires significant extra work. All the algorithms we use to decide the
inductive freeness by searching for an ``inductive path'' produce a {\it certificate}
providing an a posteriori proof of correctness for the computed path.
In the last section we describe algorithms to decide the freeness
and compute a free basis for the module of derivations of a general central arrangement.
Finally, in the appendices we describe the computation of a free basis for the largest sporadic
crystallographic arrangement of rank $7$ and $8$ and list
the exponents of all sporadic crystallographic arrangements.

\section{Preliminaries on arrangements}\label{prel}

Let $r\in\NN$, $V:=\RR^r$.
We first recall the definition of a simplicial arrangement (compare \cite[1.2, 5.1]{OT}).
\begin{defin}\label{A_R}
Let $\Ac$ be a simplicial arrangement in $V$, i.e.~
$\Ac=\{H_1,\ldots,H_n\}$ where $H_1,\ldots,H_n$ are distinct linear hyperplanes in $V$
and every component of $V\backslash \bigcup_{H\in\Ac} H$ is an open simplicial cone.
Let $\Kc(\Ac)$ be the set of connected components of $V\backslash \bigcup_{H\in\Ac} H$;
they are called the {\it chambers} of $\Ac$.
\end{defin}

We also need the concepts of a subarrangement and restriction:

\begin{defin}[{\cite[1.12-1.14]{OT}}]
Let $(\Ac,V)$ be an arrangement. We denote $L(\Ac)$ the set of all nonempty
intersections of elements of $\Ac$.

If $\Bc\subseteq\Ac$ is a subset, then $(\Bc,V)$ is called
a {\it subarrangement}. For $X\in L(\Ac)$ define a subarrangement $\Ac_X$ of $\Ac$ by
\[ \Ac_X = \{H\in\Ac\mid X\subseteq H\}. \]
Define an arrangement $(\Ac^X,X)$ in $X$ by
\[ \Ac^X=\{X\cap H\mid H\in\Ac\backslash\Ac_X \mbox{ and } X\cap H\ne \emptyset\}.\]
We call $\Ac^X$ the {\it restriction} of $\Ac$ to $X$.

Let $H_0\in\Ac$. Let $\Ac'=\Ac\backslash\{H_0\}$ and let $\Ac''=\Ac^{H_0}$.
We call $(\Ac,\Ac',\Ac'')$ a {\it triple} of arrangements and $H_0$ the
{\it distinguished} hyperplane.
\end{defin}

Recall the module of derivations of an arrangement:

\begin{defin}[{\cite[4.1]{OT}}]
Let $(\Ac,V)$ be a real arrangement and $S=S(V^*)$ the symmetric algebra of the
dual space $V^*$ of $V$.
We choose a basis $x_1,\ldots,x_r$ for $V^*$ and identify $S$ with
$\RR[x_1,\ldots,x_r]$ via the natural isomorphism $S\cong\RR[x_1,\ldots,x_r]$.
We write $\Der(S)$ for the set of derivations of $S$ over $\RR$.
It is a free $S$-module with basis $D_1,\ldots,D_r$ where $D_i$ is the usual derivation
$\partial/\partial x_i$.

A nonzero element $\theta\in\Der(S)$ is {\it homogeneous of polynomial degree} $p$
if $\theta=\sum_{k=1}^r f_k D_k$ and $f_k\in S_p$ for $a\le k\le r$.
In this case we write $\pdeg \theta = p$.

Let $\Ac$ be an arrangement in $V$ with defining polynomial
\[ Q(\Ac) = \prod_{H\in\Ac} \alpha_H \]
where $H=\ker \alpha_H$. Define the {\it module of $\Ac$-derivations} by
\[ D(\Ac) = \{\theta\in\Der(S)\mid \theta(Q(\Ac))\in Q(\Ac)S\}. \]
\end{defin}

In this paper, we prove that certain arrangements are {\it free}:

\begin{defin}
An arrangement $\Ac$ is called a {\it free arrangement} if $D(\Ac)$ is a free
module over $S$.

If $\Ac$ is free and $\{\theta_1,\ldots,\theta_r\}$ is a homogeneous basis for
$D(\Ac)$, then $\pdeg\theta_1,\ldots,\pdeg\theta_r$ are called the {\it exponents}
of $\Ac$ and we write
\[ \exp \Ac = \{\pdeg\theta_1,\ldots,\pdeg\theta_r\}. \]
Remark that the pdegrees depend only on $\Ac$ (up to ordering).
\end{defin}

We will use the following theorem:

\begin{theor}[Addition-Deletion, {\cite[Thm.~4.51]{OT}}]\label{adddel}
Suppose $\Ac\ne\emptyset$. Let $(\Ac,\Ac',\Ac'')$ be a triple.
Any two of the following statements imply the third:
\begin{eqnarray*}
\Ac \mbox{ is free with } \exp \Ac &=& \{b_1,\ldots,b_{r-1},b_r\}, \\
\Ac' \mbox{ is free with } \exp \Ac' &=& \{b_1,\ldots,b_{r-1},b_r-1\}, \\
\Ac'' \mbox{ is free with } \exp \Ac'' &=& \{b_1,\ldots,b_{r-1}\}. \\
\end{eqnarray*}
\end{theor}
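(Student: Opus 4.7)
The plan is to link the three modules $D(\Ac)$, $D(\Ac')$, and $D(\Ac'')$ by an exact sequence of graded $S$-modules and then to pin down the missing freeness statement by comparing Hilbert series.

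Let $\alpha_0\in V^*$ be the defining form of the distinguished hyperplane $H_0$. Since $Q(\Ac)=\alpha_0\,Q(\Ac')$, an immediate check gives $D(\Ac)=\{\theta\in D(\Ac')\mid\theta(\alpha_0)\in\alpha_0 S\}$. For any $\theta\in D(\Ac)$, reduction modulo $\alpha_0$ produces a derivation $\bar\theta$ of $S/\alpha_0 S$; since every $H\in\Ac\setminus\{H_0\}$ yields a hyperplane $H\cap H_0\in\Ac''$ whose defining form is the reduction of $\alpha_H$, the condition $\theta(\alpha_H)\in\alpha_H S$ passes to $\bar\theta$, so $\theta\mapsto\bar\theta$ defines a graded $S$-module map $\rho\colon D(\Ac)\to D(\Ac'')$. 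One then verifies $\ker\rho=\alpha_0\,D(\Ac')$: the inclusion $\supseteq$ is immediate, and for $\subseteq$ any $\theta$ with $\bar\theta=0$ has all coefficients divisible by $\alpha_0$, so $\theta=\alpha_0\theta'$ with $\theta'\in\Der(S)$, and the divisibility relations characterising $D(\Ac')$ transfer from $\theta$ to $\theta'$ using that $\alpha_0$ and $\alpha_H$ are coprime for $H\ne H_0$. Altogether this yields the four-term exact sequence
\begin{equation*}
0\longrightarrow D(\Ac')[-1]\xrightarrow{\alpha_0\cdot}D(\Ac)\xrightarrow{\rho}D(\Ac'').
\end{equation*}

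Given this sequence, each implication is obtained by matching Hilbert series. For a free arrangement $\Bc$ with exponents $c_1,\dots,c_s$, the Hilbert series of $D(\Bc)$ is $(t^{c_1}+\cdots+t^{c_s})/(1-t)^s$. A short computation shows that the three exponent multisets in the theorem are consistent exactly with the identity
\begin{equation*}
\mathrm{Hilb}(D(\Ac),t)\;=\;t\cdot\mathrm{Hilb}(D(\Ac'),t)+\mathrm{Hilb}(D(\Ac''),t),
\end{equation*}
which is what the exact sequence forces whenever $\rho$ is surjective. Assuming any two of the three freeness statements, the third Hilbert series is determined coefficient-by-coefficient, and combined with the exact sequence this simultaneously gives surjectivity of $\rho$ and the freeness of the remaining module. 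A homogeneous basis of the missing module is produced explicitly either as $\alpha_0$ times a basis of $D(\Ac')$ together with a lift of a basis of $D(\Ac'')$, or symmetrically in the deletion and restriction directions, and freeness is certified via \textsc{Saito}'s criterion (the determinant of the basis matrix must equal $Q(\Ac)$ up to a nonzero scalar).

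The main obstacle is the surjectivity of $\rho$, which in general can fail. The whole strategy relies on each pair of freeness hypotheses forcing, via the Hilbert series identity, enough numerical information about the third module to guarantee that lifts of a basis of $D(\Ac'')$ back into $D(\Ac)$ can be chosen. The argument thus splits into three symmetric cases—addition, deletion, and restriction—each requiring its own explicit lifting procedure, with the tight interplay between the rigidity of the graded Hilbert series and the flexibility of \textsc{Saito}'s criterion doing the essential work.
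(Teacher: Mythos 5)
First, a point of reference: the paper does not prove this theorem at all --- it is imported verbatim from \cite[Thm.~4.51]{OT} and used as a black box --- so there is no internal proof to compare against. Measured against the standard proof in \cite{OT}, your setup is exactly right: the identification $D(\Ac)=\{\theta\in D(\Ac')\mid \theta(\alpha_0)\in\alpha_0S\}$, the exact sequence $0\to D(\Ac')[-1]\xrightarrow{\alpha_0\cdot}D(\Ac)\xrightarrow{\rho}D(\Ac'')$ with $\ker\rho=\alpha_0D(\Ac')$ (this is \cite[Prop.~4.45]{OT}), and the verification that the three exponent multisets are compatible with $\mathrm{Hilb}(D(\Ac),t)=t\,\mathrm{Hilb}(D(\Ac'),t)+\mathrm{Hilb}(D(\Ac''),t)$ are all correct.

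The proposal nevertheless has a genuine gap, precisely at the place you yourself label ``the main obstacle''. The claim that ``assuming any two of the three freeness statements, the third Hilbert series is determined coefficient-by-coefficient, and combined with the exact sequence this simultaneously gives surjectivity of $\rho$ and the freeness of the remaining module'' is not justified and is essentially circular: the exact sequence only yields the coefficientwise inequality $\mathrm{Hilb}(D(\Ac),t)\le t\,\mathrm{Hilb}(D(\Ac'),t)+\mathrm{Hilb}(D(\Ac''),t)$, with equality if and only if $\rho$ is surjective --- and surjectivity is what has to be proved, not a consequence of the numerics. Moreover, even if the Hilbert series of the unknown module were pinned down, freeness of a graded submodule of $S^{1\times r}$ is not a formal consequence of its Hilbert series having the shape $(\sum_i t^{c_i})/(1-t)^r$; one must actually exhibit a homogeneous basis. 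That is exactly what the deferred ``explicit lifting procedures'' do in \cite{OT}: in the addition direction one constructs $r$ homogeneous elements of $D(\Ac)$ of degrees $b_1,\ldots,b_r$ from a basis of $D(\Ac')$ and lifts of a basis of $D(\Ac'')$, and certifies independence by Saito's criterion together with the degree count $\sum_i b_i=|\Ac|$; in the deletion and restriction directions one must prove surjectivity of $\rho$ from the freeness hypotheses (the content of \cite[4.43--4.50]{OT}). As written, the proposal is an accurate table of contents for that proof, but the theorems doing the work are named rather than proved.
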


Inspired by this theorem, one defines:

\begin{defin}[{\cite[Def.~4.53]{OT}}]
The class $\IF$ of {\it inductively free} arrangements is the smallest
class of arrangements which satisfies
\begin{enumerate}
\item The empty arrangement $\Phi_\ell$ of rank $\ell$ is in $\IF$ for $\ell\ge 0$,
\item if there exists $H\in \Ac$ such that $\Ac''\in\IF$, $\Ac'\in\IF$, and
$\exp \Ac''\subset\exp \Ac'$, then $\Ac\in\IF$.
\end{enumerate}
We will say that $(\Ac_1,\ldots,\Ac_n)$ is an {\it inductive chain} of arrangements
if $\Ac_i\backslash\Ac_{i-1}=\{H_i\}$ for $i=2,\ldots,n$ and suitable $H_i$, and if
 $(\Ac_i,\Ac_{i-1},\Ac_i^{H_i})$ is a triple of inductively free arrangements for
all $i=2,\ldots,n$.
\end{defin}

We further need:

\begin{theor}[{\cite[Thm.~6.60]{OT}}]\label{reffree}
If $G$ is a finite reflection group, then its reflection arrangement
$\Ac=\Ac(G)$ is free and $\exp \Ac$ is the set of coexponents of $G$.
\end{theor}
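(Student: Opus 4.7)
The plan is to give an invariant-theoretic proof via \textsc{Saito}'s criterion. First, by the \textsc{Shephard}--\textsc{Todd}--\textsc{Chevalley} theorem the ring of invariants $S^G$ is a polynomial algebra $\RR[f_1,\ldots,f_r]$, and a parallel analysis of $\Der(S)$ as a graded $G$-module (via \textsc{Molien}'s formula and \textsc{Solomon}'s theorem on invariant differential forms) produces a free $S^G$-basis $\theta_1,\ldots,\theta_r$ of the module $\Der(S)^G$ of $G$-invariant derivations whose polynomial degrees are precisely the coexponents $d_1^*-1,\ldots,d_r^*-1$ of $G$.

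Second, I would check that every such $\theta_i$ automatically lies in $D(\Ac)$. For each reflecting hyperplane $H \in \Ac$ with defining form $\alpha_H$, the distinguished reflection $s_H \in G$ acts on $\alpha_H$ by a non-trivial root of unity $\zeta_H$, while it acts trivially on $S/\alpha_H S$. Hence $G$-invariance of $\theta_i$ gives
\[ \theta_i(\alpha_H) \equiv s_H\bigl(\theta_i(\alpha_H)\bigr) = \theta_i(\zeta_H \alpha_H) = \zeta_H\,\theta_i(\alpha_H) \pmod{\alpha_H}, \]
so $(1-\zeta_H)\theta_i(\alpha_H) \in \alpha_H S$ and therefore $\alpha_H$ divides $\theta_i(\alpha_H)$. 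Since the $\alpha_H$ are pairwise coprime linear forms, it follows that $\theta_i(Q(\Ac)) \in Q(\Ac)\cdot S$, that is, $\theta_i \in D(\Ac)$.

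Third, apply \textsc{Saito}'s criterion: a system of $r$ homogeneous derivations in $D(\Ac)$ is a free $S$-basis of $D(\Ac)$ if and only if $\det(\theta_i(x_j))$ is a nonzero scalar multiple of $Q(\Ac)$. The previous step together with pairwise coprimality of the $\alpha_H$ shows that $Q(\Ac)$ divides $\det(\theta_i(x_j))$. A degree count using the standard identity $\sum_i(d_i^*-1) = |\Ac|$, valid for every finite reflection group, matches the two homogeneous degrees, so the quotient is a constant; it is nonzero because the $\theta_i$ form an $S^G$-basis of $\Der(S)^G$, making their coefficient matrix invertible over the field of fractions of $S$. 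Saito's criterion then identifies $\pdeg\theta_i = d_i^*-1$ as the exponents of $\Ac$, proving both freeness and the exponent formula.

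The main obstacle is the first step: producing a homogeneous $S^G$-basis of $\Der(S)^G$ whose polynomial degrees coincide with the coexponents of $G$. This is precisely where the hypothesis that $G$ is a reflection group is essential, and it relies on the fine structure of $\Der(S)$ as a graded $G$-module rather than on the combinatorics of $\Ac$. Once this structural input is in place, the remaining argument is a clean divisibility-and-degree computation inside Saito's criterion.
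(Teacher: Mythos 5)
The paper does not prove this statement at all: it is imported verbatim from \cite[Thm.~6.60]{OT}, and the introduction explicitly points to ``the elegant invariant theoretic proof'' there, which is exactly what you reconstruct (basic derivations as an $S^G$-basis of $\Der(S)^G\cong(S\otimes V)^G$ in coexponent degrees, divisibility of $\theta_i(\alpha_H)$ by $\alpha_H$ from invariance under the reflection $s_H$, and Saito's criterion with the degree identity $\sum_i(d_i^*-1)=|\Ac|$). Your outline is correct and matches the cited source; the two steps you defer --- the existence of the homogeneous $S^G$-basis in the right degrees and the nonvanishing of $\det(\theta_i(x_j))$, which rests on $S\otimes_{S^G}\Der(S)^G\to\Der(S)$ being generically an isomorphism --- are precisely the substantive content of \cite[\S 6.3]{OT}.
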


Thus by \cite[VI.~Planche II,IV]{b-BourLie4-6}:

\begin{remar}\label{expBD}
Let $\Ac$ resp.~$\Bc$ be the reflection arrangements of type $B_r$ resp.~$D_r$.
Then
\[ \exp \Ac = \{ 1,3,\ldots,2r-3,2r-1 \}, \]
{\small
\[ \exp \Bc = \begin{cases}
\{ 1,3,\ldots,r-3,r-1,r-1,r+1,\ldots,2r-5,2r-3 \} & r \mbox{ even},\\
\{ 1,3,\ldots,r-4,r-2,r-1,r,r+2,\ldots,2r-5,2r-3 \} & r \mbox{ odd}.\\
\end{cases} \]}
\end{remar}

\section{Crystallographic arrangements}\label{gen_res}

Recall the definition of a crystallographic arrangement and
the correspondence to \textsc{Cartan} schemes:
\begin{defin}[{\cite[Def.~2.3]{p-C10}}]\label{cryarr}
Let $(\Ac,V)$ be a simplicial arrangement and $R\subseteq V$ a finite set
such that $\Ac = \{ \alpha^\perp \mid \alpha \in R\}$ and $\RR\alpha\cap R=\{\pm \alpha\}$
for all $\alpha \in R$. For a chamber $K$ of $\Ac$ set
\[ R^K_+ = R \cap \sum_{\alpha \in B^K} \RR_{\ge 0} \alpha, \]
where $B^K$ is the set of normal vectors in $R$ of the walls of $K$ pointing to the inside.
We call $(\Ac,R)$ a {\it crystallographic arrangement} if
\begin{itemize}
\item[(I)] \quad $R \subseteq \sum_{\alpha \in B^K} \ZZ \alpha$\quad for all chambers $K$.
\end{itemize}
\end{defin}

\begin{theor}[{\cite[Thm.~1.1]{p-C10}}]
There is a one-to-one correspondence between crystallographic arrangements 
and connected simply connected \textsc{Cartan} schemes for which the
real roots are a finite root system (up to equivalence on both sides).
\end{theor}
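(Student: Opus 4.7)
The plan is to construct maps in both directions between the two classes of objects and verify that they are mutually inverse up to the natural notions of equivalence.

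\emph{From arrangements to Cartan schemes.} Given a crystallographic pair $(\Ac,R)$, I would take the set of chambers $\Kc(\Ac)$ as the set of objects of the Cartan scheme, indexing simple roots at $K\in\Kc(\Ac)$ by the basis $B^K=\{\alpha_1^K,\ldots,\alpha_r^K\}$ of inward wall normals. Condition (I) ensures that for every pair $i,j$ the orthogonal reflection image of $\alpha_j^K$ at the wall $(\alpha_i^K)^\perp$ lies in $R$ and differs from $-\alpha_j^K$ by an integer multiple of $\alpha_i^K$, so that one obtains integer Cartan entries $c_{ij}^K$ via $\alpha_j^K\mapsto\alpha_j^K-c_{ij}^K\alpha_i^K$. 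The object change maps $r_i\colon\Kc(\Ac)\to\Kc(\Ac)$ are defined by sending $K$ to the unique chamber sharing the $i$-th wall with $K$; this is well defined because $\Ac$ is simplicial, so every chamber has precisely $r$ walls. Connectedness of the scheme follows from the fact that any two chambers can be joined by a gallery; simply connectedness follows because the chamber graph of a simplicial arrangement has no nontrivial combinatorial $2$-cycles beyond those forced by the groupoid axioms. Finally, the real roots of the scheme are by construction obtained from the $B^K$ by iterated application of the $r_i$-reflections and therefore embed into the finite set $R$, hence form a finite root system.

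\emph{From Cartan schemes to arrangements.} Conversely, given a connected simply connected Cartan scheme whose real roots $R\re$ form a finite root system, I would fix a base object $a_0$ and identify the real roots at $a_0$ with an explicit finite set $R\subseteq\ZZ^I\subseteq\RR^I$ using the simple roots at $a_0$ as a $\ZZ$-basis. Set $\Ac=\{\alpha^\perp\mid\alpha\in R\}$ in $V=\RR^I$. The groupoid action produces a chamber for each object: the fundamental chamber attached to $a_0$ is $\sum_{i\in I}\RR_{>0}\alpha_i^{a_0}$, and the chambers of other objects are obtained by successively crossing walls, which on the level of cones corresponds to applying the reflections coming from the Cartan matrices. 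Finiteness of $R\re$ ensures that the resulting family of cones tiles $V$ (this is where the simplicial structure of $\Ac$ comes from), and the integrality of each Cartan matrix $C^a$ translates verbatim into condition (I).

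\emph{Mutual inverseness and main obstacle.} Applying the two constructions in sequence clearly returns the original datum on the combinatorial skeleton; the content is checking that no additional data are lost or introduced, and that the equivalences on both sides (equivalence of Cartan schemes by relabeling objects/simple roots, isomorphism of crystallographic arrangements respecting $R$) match up. The hardest technical step, and the main obstacle, is in the direction from the arrangement to the scheme: one must verify that the combinatorial reflection induced by crossing a wall (which abstractly is $\alpha_j^K\mapsto\alpha_j^K-c_{ij}^K\alpha_i^K$) is compatible with the geometric behavior along the whole gallery, so that the Cartan matrices $C^K$ assemble into a genuine Cartan scheme satisfying its coherence axioms. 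This reduces to an argument about $2$-walls in rank-$2$ subarrangements: one shows that whenever two simple reflections close up after finitely many alternating applications (which they do because $\Ac$ is simplicial and every rank-$2$ restriction is a finite dihedral pencil), the corresponding product of Cartan reflections has the compatible order, so that the groupoid relations hold. Once this is established, the bijectivity of the correspondence, and its compatibility with the respective notions of equivalence, follow by tracking the constructions carefully.
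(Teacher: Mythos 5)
This theorem is imported into the paper verbatim from \cite[Thm.~1.1]{p-C10} and is not proved there, so there is no internal argument to compare yours against; judged on its own merits, your sketch has the right skeleton (chambers as objects, inward wall normals $B^K$ as simple roots, wall--crossing as the object change maps), but it contains one concrete error and leaves every genuinely hard step as an assertion.

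The error: you define the Cartan entries $c_{ij}^K$ by requiring that the \emph{orthogonal} reflection of $\alpha_j^K$ in the wall $(\alpha_i^K)^\perp$ lie in $R$. A crystallographic arrangement is in general not invariant under the orthogonal reflections in its hyperplanes --- essentially only the Coxeter ones are --- so that reflection image typically lies nowhere near $R$, and condition (I) says nothing about it. The correct construction reads $c_{ij}^K$ off from the wall normals of the adjacent chamber $K'$: one must first prove that $B^{K'}$ consists of $-\alpha_i^K$ together with vectors of the form $\alpha_j^K+m_{ij}\alpha_i^K$ with $m_{ij}\in\ZZ_{\geq 0}$, which uses (I) together with the geometry of two simplicial cones sharing a facet; the resulting base--change maps $\sigma_i^K$ are linear but not orthogonal, and this distinction is exactly what makes Weyl groupoids strictly more general than Weyl groups. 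Beyond this, the steps you yourself flag as the ``main obstacle'' --- that the $C^K$ satisfy the coherence axioms of a Cartan scheme, that the scheme is simply connected (a precise covering--theoretic condition on the groupoid, not just the absence of combinatorial $2$-cycles), that the real roots satisfy the root--system axioms, and, in the converse direction, that the cones attached to the objects actually tile $V$ so that $\Ac$ is simplicial and that distinct objects give distinct chambers --- are each nontrivial and are dispatched with ``one shows'' or ``follow by tracking the constructions carefully.'' As written, the proposal is a plausible plan rather than a proof; carrying it out is precisely the content of \cite{p-C10}.
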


We omit the definitions of \textsc{Cartan} schemes and their root systems here because we will not
need them. It suffices to know that there is a complete classification of those
\textsc{Cartan} schemes which correspond to crystallographic arrangements (\cite[Thm.~1.1]{p-CH10}):
\begin{theor}
There are exactly three families of connected simply connected \textsc{Cartan} schemes for which
the real roots form a finite irreducible root system:
\begin{enumerate}
\item The family of \textsc{Cartan} schemes of rank two parametrized by triangulations of a
convex $n$-gon by non-intersecting diagonals.
\item For each rank $r>2$, the standard \textsc{Cartan} schemes of type $A_r$, $B_r$, $C_r$
and $D_r$, and a series of $r-1$ further \textsc{Cartan} schemes described explicitly in
Thm.\ \ref{fwg_thm:main_rank_gt8}.
\item A family consisting of $74$ further ``sporadic'' \textsc{Cartan} schemes (including those
of type $F_4$, $E_6$, $E_7$ and $E_8$).
\end{enumerate}
\end{theor}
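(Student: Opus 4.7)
The plan is to reduce the classification to an enumeration of finite Weyl groupoids, using the one-to-one correspondence with crystallographic arrangements stated just above; then proceed by induction on the rank $r$. First I would set up the combinatorial data: a connected simply connected Cartan scheme is determined by a base object together with a coherent family of generalized Cartan matrices, subject to the exchange relations along simple reflections, and finiteness of the real root system imposes sharp integrality and boundedness constraints on the off-diagonal entries.

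In rank $2$ the Weyl groupoid is encoded by a cyclic sequence of $2\times 2$ Cartan matrices, and finiteness forces the sequence to close after finitely many reflections. I would translate this closure into a combinatorial condition on the sequence of off-diagonal entries and show that admissible sequences biject with triangulations of a convex $n$-gon by non-intersecting diagonals, reading off one Cartan matrix per triangle; this is essentially a matter of matching the local moves (diagonal flips) to simple-reflection exchange relations.

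For rank $r\ge 3$ I would argue by induction: every parabolic subgroupoid of rank $r-1$ must already appear on the classified list, so the rank-$r$ Cartan scheme at each object is pinned down by a choice of $r$ classified rank-$(r-1)$ parabolics plus a limited amount of glueing data. For small $r$ (say $3\le r\le 8$) this reduces to a finite but large combinatorial search best performed by computer, producing the sporadic list. For large $r$ one needs a stabilization theorem: beyond some explicit rank only the standard types $A_r,B_r,C_r,D_r$ and the $r-1$ further schemes of Thm.~\ref{fwg_thm:main_rank_gt8} survive. Such a stabilization should follow from bounding the Cartan entries via integrality combined with a rigidity argument showing that any ``non-standard'' seed at large enough rank forces a root system that is no longer finite.

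The main obstacle will be this stabilization step and controlling the combinatorial explosion at low rank. Certifying that there are exactly $74$ sporadic schemes — no fewer and crucially no more — demands a completeness argument for the computer search, and the densest stratum lies near the $E_8$-type scheme, where subtle further schemes are most likely to hide; organising the case analysis so that no branch is missed, while keeping the bookkeeping compatible with a humanly verifiable induction, is where I expect the bulk of the difficulty to lie.
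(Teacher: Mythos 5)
The paper does not prove this theorem at all: it is imported verbatim from \cite[Thm.~1.1]{p-CH10} (the text immediately preceding it says exactly this), so there is no internal proof to compare your attempt against. Judged on its own terms, your outline does track the strategy of the cited classification: the rank-two case is indeed handled by identifying the admissible cyclic sequences of $2\times 2$ Cartan matrices with triangulations of a convex $n$-gon, higher ranks are treated by induction using the fact that restrictions (parabolic subgroupoids) of finite Weyl groupoids are again finite Weyl groupoids, the sporadic list comes from a computer enumeration, and there is a stabilization theorem (the threshold is rank $8$; this is the content of Thm.~\ref{fwg_thm:main_rank_gt8}) beyond which only the standard types and the series $\Xi_{r,Z}$, $\Psi_{r,Y}$, $\Psi'_{r,Y}$ survive. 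So the skeleton is right.

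However, what you have written is a research plan, not a proof: every step carrying actual mathematical content is deferred. You do not show why finiteness of the real root system forces the rank-two data to close up into a triangulation (this requires an explicit analysis of the sequences of Cartan entries and their local moves, and is itself a full paper in the literature); you assert rather than prove the stabilization theorem (``such a stabilization should follow from \dots a rigidity argument''); and you give no completeness argument for the finite search that certifies \emph{exactly} $74$ sporadic schemes, which you yourself identify as the crux. Since the blind task was to supply a proof and the three enumerated items are each established only modulo a step you explicitly leave open, the proposal does not prove the statement; it correctly reconstructs the architecture of the argument in \cite{p-CH10} but none of its load-bearing parts.
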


\begin{defin}
For a finite set $\Lambda\subset \RR^r$ we will write
\[ {}^\Lambda\Ac:=\{\beta^\perp \mid \beta\in\Lambda\}. \]
Let $r,s\in\NN$ with $r\le s$.
We will say that a finite set $\Lambda\subseteq\ZZ^s$
is a \textit{root set of rank $r$} if there exists a \textsc{Cartan} scheme $\Cc$ of
rank $r$ and an injective linear map $w : \ZZ^r\rightarrow\ZZ^s$
such that $w(\rer a)=\Lambda$ for some object $a$.
\end{defin}

\section{The infinite series}\label{DD_series}

Let $r\in \NN$.
Denote $\{\alpha_1,\ldots,\alpha_r\}$ the standard basis of $\ZZ^r$.
We use the following notation: For $1\le i,j\le r$, let
\[ \frto_{i,j}:=\begin{cases}
\sum_{k=i}^j\alpha_k & i\le j \\ 0 & i>j
\end{cases}. \]

\begin{defin}
Let $Z\subseteq\{1,\ldots,r-1\}$. Let $\Xi_{r,Z}$ denote the set of roots\footnote{This set
is denoted $\Phi_{r,Z}$ in \cite{p-CH10}; we denote it $\Xi_{r,Z}$ here to avoid confusing it with the
emtpy arrangement.}
\begin{eqnarray*}
\label{set1} &\frto_{i,j-1}, & 1\le i<j\le r,\\
\label{set2} &\frto_{i,r-2}+\alpha_r, & 1\le i< r,\\
\label{set3} &\frto_{i,r}+\frto_{j,r-2}, & 1\le i<j<r,\\
\label{set4} &\frto_{j,r}+\frto_{j,r-2}, & j\in Z.
\end{eqnarray*}

Let $Y\subseteq\{1,\ldots,r-1\}$. Let $\Psi_{r,Y}$ denote the set of roots
\begin{eqnarray*}
&\frto_{i,j}, & 1\le i\le j\le r,\\
&\frto_{i,r}+\frto_{j,r-1}, & 1\le i<j<r,\\
&\frto_{j,r}+\frto_{j,r-1}, & j\in Y.
\end{eqnarray*}

Further, denote $\Psi'_{r,Y}$ the set obtained from $\Psi_{r,Y}$ by exchanging
$\alpha_{r-1}$ and $\alpha_r$.
\end{defin}

\begin{remar}
The sets $\Xi_{r,\emptyset}$ resp.\ $\Psi_{r,\{1,\ldots,r-1\}}$ are the sets of positive roots
of the \textsc{Weyl} groups of type $D_r$ resp.\ $C_r$, compare \cite[VI.\ 4.6, 4.8]{b-BourLie4-6}.
\end{remar}

The following holds (\cite[Thm.~3.21]{p-CH10}):
\begin{theor}\label{fwg_thm:main_rank_gt8}
Let $\Cc$ be a connected simply connected \textsc{Cartan} scheme of rank $r>8$ for which the real
roots form a finite irreducible root system and let
\[ \rsC_+:=\{ \rer{a}_+ \mid a \in\Cc\}. \]
Then there are two possibilities:\\
(1) The \textsc{Cartan} scheme $\Cc$ is standard ($|\rsC_+|=1$) of type $A$, $B$, $C$, $D$.\\
(2) Up to equivalence the root sets of $\Cc$ are given by
\[ \rsC_+ = \{ \Xi_{r,Z},\Psi_{r,Y},\Psi'_{r,Y} \mid Z,Y\subseteq\{1,\ldots,r-1\},\:\:|Z|=s,|Y|=s-1\} \]
for some $s\in \{1,\ldots,r-1\}$.

In particular, if $\Cc$ is not standard then it has
\[ \binom{r-1}{s-1}+\binom{r}{s} \]
different root sets and $2^{r-1}(m+r)(r-1)!$ objects.
\end{theor}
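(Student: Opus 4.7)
The plan is to proceed by induction on the rank $r$, with the base case being the classification in ranks $3 \le r \le 8$, which produces the sporadic Cartan schemes and already exhibits the initial representatives of the $\Xi_{r,Z}$, $\Psi_{r,Y}$, $\Psi'_{r,Y}$ families. For a Cartan scheme $\Cc$ of rank $r > 8$ the essential input is that every parabolic sub-Cartan scheme of rank $r-1$, obtained by deleting a simple root $\alpha_i$ at some object $a$, is itself a connected simply connected Cartan scheme whose real roots form a finite (possibly reducible) root system. By the inductive hypothesis, each irreducible parabolic of rank $\ge 9$ is either standard of type $A$, $B$, $C$, $D$ or lies in one of the new families; the case $r-1 = 8$ permits $E_8$ as an extra possibility, which has to be eliminated in the inductive step.

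The heart of the argument analyzes how a rank-$r$ Cartan scheme is glued from its rank-$(r-1)$ parabolics. Fix an object $a$ and write $\Cc^{(i)}$ for the parabolic obtained by removing $\alpha_i$. Any two such parabolics $\Cc^{(i)}$ and $\Cc^{(j)}$ share a common rank-$(r-2)$ sub-scheme $\Cc^{(i,j)}$, and the two embeddings $\Cc^{(i,j)} \hookrightarrow \Cc^{(i)}$ and $\Cc^{(i,j)} \hookrightarrow \Cc^{(j)}$ force tight compatibility conditions on the Cartan matrix entries $c^a_{ik}$ and $c^a_{jk}$. Because $r > 8$ the parabolics are large enough that these constraints pin almost every entry to the value occurring in a classical Cartan matrix, and the only residual freedom consists of a distinguished set of positions allowing a non-classical entry. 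That set is precisely the data $Z$ or $Y$ appearing in the theorem. One then checks by direct computation that the proposed Cartan matrices are closed under all simple reflections, with transition rules on $Z$ and $Y$ that realize the finitely many orbits listed in the statement.

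Once the local classification is in place, the numerical part is routine. The root sets are indexed by subsets $Z \subseteq \{1,\ldots,r-1\}$ with $|Z| = s$ and $Y \subseteq \{1,\ldots,r-1\}$ with $|Y| = s-1$, modulo a natural action of the Weyl groupoid that permutes only the ``interior'' positions and hence acts trivially on the cardinalities. This yields exactly $\binom{r}{s}$ orbit representatives of $\Xi$-type and $\binom{r-1}{s-1}$ of $\Psi$-type. The object count $2^{r-1}(m+r)(r-1)!$ is then obtained by multiplying the number of root sets by the order of the stabilizer of a fixed object, which has a hyperoctahedral factor $2^{r-1}(r-1)!$ together with the combinatorial factor $(m+r)$ measuring the freedom in relabeling the distinguished positions.

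The main obstacle is the inductive step itself, specifically the task of ruling out sporadic Cartan schemes of rank $r > 8$ whose every single-root-deletion parabolic is classical or of $\Xi/\Psi$ type yet which are themselves not of these forms. This is exactly where the bound $r > 8$ becomes essential: at $r = 8$ the sporadic $E_8$ Cartan scheme exists with all rank-$7$ parabolics of classical or $E_7$ type, so a purely inductive argument cannot start lower. I expect the elimination at rank $r > 8$ to reduce to a finite case check of the admissible Cartan-matrix entries at a single object, showing that any attempted $E$-series extension fails a compatibility already inside some rank-$(r-2)$ parabolic; this finite but intricate combinatorial verification is the computational core of the proof.
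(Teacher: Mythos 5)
The first thing to say is that the paper does not prove this theorem: it is imported verbatim as \cite[Thm.~3.21]{p-CH10}, and the present article only sets up the notation $\Xi_{r,Z}$, $\Psi_{r,Y}$, $\Psi'_{r,Y}$ needed to state it (the sentence introducing it reads ``The following holds''). So there is no internal argument to compare yours against; what you have written is an outline of how one might reprove the external classification of finite Weyl groupoids in rank $r>8$, which is the content of an entire separate paper.

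Judged as such an outline, it has a genuine gap at its core. The decisive step --- showing that for $r>8$ there is no Cartan scheme all of whose rank-$(r-1)$ parabolics are of the allowed types but which is itself neither classical nor of $\Xi/\Psi$ type --- is precisely the content of the theorem, and you only write that you \emph{expect} it to reduce to a finite check of Cartan-matrix entries at a single object. That expectation is not justified and is not obviously true: the defining conditions of a finite Weyl groupoid constrain the root sets at \emph{all} objects simultaneously, so local compatibility of Cartan matrices at one object does not suffice, and controlling the full orbit of root sets under the simple reflections is where the real work lies. Two smaller points: your count of root sets is mis-attributed --- the sets $\Xi_{r,Z}$ with $Z\subseteq\{1,\ldots,r-1\}$, $|Z|=s$, number $\binom{r-1}{s}$ (not $\binom{r}{s}$), and the families $\Psi_{r,Y}$ and $\Psi'_{r,Y}$ contribute $\binom{r-1}{s-1}$ \emph{each}, the stated total arising from $\binom{r-1}{s}+2\binom{r-1}{s-1}=\binom{r}{s}+\binom{r-1}{s-1}$ by Pascal's rule; and the parameter $m$ in the object count $2^{r-1}(m+r)(r-1)!$ is not defined in the statement as reproduced here, so your interpretation of the factor $(m+r)$ as ``relabeling freedom'' is a guess rather than a derivation. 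For the purposes of this paper the correct justification of the theorem is simply the citation of \cite{p-CH10}.
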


Since the sets in $\rsC_+$ are equal up to a base change, we obtain:
\begin{corol}\label{seriesutbc}
Let $\Ac$ be an irreducible crystallographic arrangement of rank $r\ge 3$ which is not sporadic.
Then up to a base change
\[ \Ac = \{ \alpha^\perp \mid \alpha \in R_+ \} \]
where $R_+$ is either a set of positive roots of type $A_r$, $B_r$, $C_r$ or $D_r$, or
$R_+=\Xi_{r,Z}$ for some $Z\subset\{1,\ldots,r-1\}$.
\end{corol}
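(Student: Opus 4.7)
The plan is to deduce the corollary almost formally from the classification of non-sporadic connected simply connected \textsc{Cartan} schemes, together with the observation — made in the paragraph immediately preceding the statement — that any two root sets inside the same $\rsC_+$ differ only by an integral base change. Let $\Cc$ be the \textsc{Cartan} scheme corresponding to $\Ac$ under the bijection recalled at the beginning of Section~\ref{gen_res}. Since $\Ac$ is irreducible, non-sporadic and of rank $r\ge 3$, the classification listed there forces $\Cc$ to be either standard of type $A_r$, $B_r$, $C_r$ or $D_r$, or else one of the non-standard schemes described by Theorem~\ref{fwg_thm:main_rank_gt8}.

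In the standard case, $\rsC_+$ consists of a single positive system, which by construction coincides with a positive system of the corresponding classical type, and the claim holds with no base change needed. In the non-standard case, Theorem~\ref{fwg_thm:main_rank_gt8}(2) gives
\[
\rsC_+ = \{\Xi_{r,Z},\ \Psi_{r,Y},\ \Psi'_{r,Y} \mid Z,Y\subseteq\{1,\ldots,r-1\},\ |Z|=s,\ |Y|=s-1\}
\]
for some $s\in\{1,\ldots,r-1\}$. Each of these sets is of the form $\rer{a}_+$ for a suitable object $a$ of $\Cc$, and passing from one object to another is realised by a composition of simple reflections of $\Cc$; this composition is an element of $\GL_r(\ZZ)$ carrying one simple system to another, and therefore carrying the corresponding full root sets onto one another. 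The contragredient of this element then induces a linear automorphism of $V=\RR^r$ identifying the three candidate arrangements $^{\Xi_{r,Z}}\Ac$, $^{\Psi_{r,Y}}\Ac$ and $^{\Psi'_{r,Y}}\Ac$. Choosing a $\Xi_{r,Z}$-representative yields the conclusion.

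The step I would single out as the main obstacle is converting the equivalence of root sets inside a common $\rsC_+$ — which is a priori a combinatorial statement about $\ZZ^r$ — into a genuine linear equivalence of hyperplane arrangements in $V$. This is not deep, but it rests on the definition of a root set of rank $r$ as the image $w(\rer{a})$ of an injective integral linear map: two root sets $\Lambda,\Lambda'$ of the same $\Cc$ are then related by $\Lambda'=g(\Lambda)$ for some $g\in\GL_r(\ZZ)$, and the induced automorphism $g^{-\top}$ of $V$ sends $^\Lambda\Ac$ to $^{\Lambda'}\Ac$. Once this linearity is in hand, nothing further is needed beyond quoting the classification.
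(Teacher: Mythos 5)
Your proposal is correct and follows essentially the same route as the paper, which derives the corollary from the classification with only the one-line justification ``Since the sets in $\rsC_+$ are equal up to a base change''; you simply spell out why that base change (a groupoid morphism in $\GL_r(\ZZ)$ carrying one root set of $\Cc$ onto another) induces a linear equivalence of the corresponding arrangements, which is exactly the implicit content of the paper's remark.
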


We now treat the inductive freeness of the series:

\begin{propo}\label{prop:series_indfree}
Let $\Ac$ be an irreducible crystallographic arrangement of rank $r$ which is not sporadic.
Then $\Ac$ is an inductively free arrangement.
\end{propo}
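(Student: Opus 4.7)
The plan is to invoke Corollary \ref{seriesutbc}, which reduces the problem to two cases: either $\Ac$ is (up to a base change) a reflection arrangement of type $A_r$, $B_r = C_r$ or $D_r$, or $\Ac = {}^{\Xi_{r,Z}}\Ac$ for some $Z \subseteq \{1,\ldots,r-1\}$. Rank $r=2$ is immediate, since every real central rank-$2$ arrangement is inductively free (build it up one hyperplane at a time; every rank-$1$ restriction is trivially inductively free), so I assume $r \ge 3$.

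For the Coxeter cases $A_r$, $B_r$, $D_r$ I would cite or reprove the classical inductive freeness from \cite{OT}: a standard induction on $r$ distinguishes a coordinate hyperplane whose restriction factors as a product of smaller Coxeter arrangements of the same family, and the exponents match by Theorem \ref{reffree} and Remark \ref{expBD}, so Theorem \ref{adddel} applies.

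The remaining work is the family $\Xi_{r,Z}$, for which I set up a double induction, outer on $r$ and inner on $s := |Z|$, with the proposition in rank $r-1$ already established. The base $s=0$ gives $\Xi_{r,\emptyset} = D_r$, handled above. For $s \ge 1$, pick $j \in Z$ and distinguish the type-$4$ hyperplane
\[ H := (\eta_{j,r}+\eta_{j,r-2})^\perp, \]
so that $\Ac \setminus \{H\} = {}^{\Xi_{r,Z\setminus\{j\}}}\Ac$ is inductively free by the inner induction on $s$. A direct calculation projects each remaining root onto $H$; after identifying proportional images and performing a base change, one expects to recognise $\Ac^H$ as either a Coxeter arrangement of rank $r-1$ or another member $\Xi_{r-1,Z'}$ of the same series, so it is inductively free by the outer induction hypothesis. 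Theorem \ref{adddel} then closes the step once the exponents match.

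The main obstacle is exactly this last identification of $\Ac^H$ up to base change together with the verification of the exponent inclusion $\exp \Ac^H \subset \exp(\Ac \setminus \{H\})$. The exponents of $\Xi_{r,Z}$ are not available in the literature and must be produced on the fly: one starts from $\exp D_r = \{1,3,\ldots,2r-3,r-1\}$ and tracks how each successive type-$4$ root bumps exactly one exponent by $1$. The combinatorics is finite but delicate, and the explicit $\eta$-notation of the definition is what keeps the bookkeeping tractable.
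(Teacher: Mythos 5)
Your reduction via Corollary \ref{seriesutbc} matches the paper's first step, but from there the arguments diverge, and yours stops at precisely the decisive point. The paper applies the explicit base change $\alpha_i\mapsto\alpha_i-\alpha_{i+1}$, $\alpha_r\mapsto\alpha_{r-1}+\alpha_r$, which identifies ${}^{\Xi_{r,Z}}\Ac$ with the arrangement $D_r^k$ of \textsc{Jambu} and \textsc{Terao}, defined by $x_1\cdots x_k\prod_{i<j}(x_i\pm x_j)$ with $k=|Z|$; since $D_r^0=D_r$ and $D_r^r\cong B_r\cong C_r$, a single citation of \cite[Example (2.6)]{JT} then disposes of types $B$, $C$, $D$ and all the $\Xi_{r,Z}$ at once. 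Your double induction --- adding the type-$4$ roots $\frto_{j,r}+\frto_{j,r-2}$ one at a time to $\Xi_{r,\emptyset}=D_r$ --- is, after that base change, exactly the \textsc{Jambu}--\textsc{Terao} induction of adding the coordinate hyperplanes $x_j=0$ to $D_r$. So the strategy is sound, but you are reproving their result from scratch, and the proof of each inductive step is replaced by the phrase ``one expects''.

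Concretely, three things are asserted rather than proved. (i) The identification of $\Ac^H$: restricting $D_r^k$ to the added hyperplane $x_j=0$ sends $x_i\pm x_j$ to $x_i$, so $\Ac^H\cong D_{r-1}^{r-1}\cong B_{r-1}$, \emph{not} another member $\Xi_{r-1,Z'}$; hence your inner induction does not feed back into the $\Xi$-family but needs the inductive freeness of $B_{r-1}$ as an external ingredient --- and note that \cite[Example 4.55]{OT} only covers type $A$, while for $D_r$ (not supersolvable for $r\ge 4$) you again end up needing \cite{JT} or an independent chain. (ii) The exponents: you claim $\exp{}^{\Xi_{r,Z}}\Ac$ is not in the literature, but $\exp D_r^k=\{1,3,\ldots,2r-3,\,r-1+k\}$ is precisely what \cite{JT} provides, and without it the inclusion $\exp\Ac^H=\{1,3,\ldots,2r-3\}\subseteq\exp(\Ac\setminus\{H\})$ cannot be checked. (iii) The claim that each type-$4$ root ``bumps exactly one exponent by $1$'' is the content of (ii) and is nowhere verified. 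None of this is difficult once the base change to $D_r^k$ is made --- which is exactly why the paper makes it --- but as written your argument is an outline of \textsc{Jambu}--\textsc{Terao}'s proof with its computations omitted, not a proof.
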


\begin{proof}
If $\Ac$ is a reflection arrangement of type $A$, then it is inductively free
by \cite[Example 4.55]{OT}.
So assume that $\Ac$ is not of type $A$ and that $\Ac$ is not sporadic.
By Corollary \ref{seriesutbc} we may assume
$\Ac = \{ \alpha^\perp \mid \alpha\in \Xi_{r,Z} \}$
for a certain subset $Z\subseteq\{1,\ldots,r-1\}$, $Z\ne \emptyset$, or
that $\Ac$ is of type $C$. Using the base change
\[ \alpha_i \mapsto \alpha_i-\alpha_{i+1}, \quad \alpha_r \mapsto \alpha_{r-1}+\alpha_r \]
for $i=1,\ldots,r-1$, one obtains that the arrangement $\Xi_{r,Z}$ is isomorphic to
the arrangement denoted $D_r^k$ in \cite{JT} defined by
\[ x_1\cdots x_k \prod_{1\le i<j\le r} (x_i\pm x_j) \]
for $k=|Z|$.
Further, $D_r^r$ is isomorphic to the arrangements of type $B$ and $C$.
\textsc{Jambu} and \textsc{Terao} showed \cite[Example (2.6)]{JT} that $D_r^k$ are inductively free for all $r,k$.
\end{proof}

\section{Inductive freeness of the sporadic crystallographic arrangements}\label{wg_sporadic}

\subsection{Ranks three to seven}

The key to the algorithm is the fact that the root sets ``are''
sets of real roots for a \textsc{Cartan} scheme: Let $\Cc$ be a \textsc{Cartan} scheme
and $a$ an object. Then for each root $\alpha\in R_+^a$ there exists an object
$b$ and a morphism $w\in\Hom(a,b)$ such that $w(\alpha)$ is
a simple root. In particular, we get:

\begin{defin}\label{muiota} Let $\Cc$ be a \textsc{Cartan} scheme, $a$ an object and
$R^a$ the set of real roots at $a$. Then there exist maps (not unique)
\[ \mu_{R^a} : R^a \rightarrow \Hom(a,\Wc(\Cc)),
\quad \iota_{R^a} : R^a \rightarrow \{1,\ldots,r\} \]
such that for all $\alpha\in R^a$ the set $\mu_{R^a}(\alpha)(R^a)$ is a root
set and
\[ \mu_{R^a}(\alpha)(\alpha)=\alpha_{\iota_{R^a}(\alpha)} \]
is simple. Denote
$g_i(\lambda_1,\ldots,\lambda_r):=\gcd(\lambda_1,\ldots,\widehat\lambda_i,\ldots,\lambda_r)$
where ``$\widehat{\ }$'' means that we omit this element and let $\alpha\in R^a$. We set
\[ {}^\alpha R^a :=
\Big\{\frac{1}{g_{\iota_R(\alpha)}(\lambda_1,\ldots,\lambda_r)}
\sum_{\iota_R(\alpha)\ne i=1}^r \lambda_i\alpha_i \quad \mid \hspace{60pt} \]
\[ \hspace{60pt}\sum_{i=1}^r \lambda_i\alpha_i \in \mu(R^a), \quad
\sum_{\iota_R(\alpha)\ne i=1}^r \lambda_i\alpha_i\ne 0 \Big\} \]
and view ${}^\alpha R^a\subseteq\langle\alpha_1,\ldots,\widehat\alpha_{\iota(\alpha)},\ldots,\alpha_r\rangle$
as a subset of $\RR^{r-1}$.
\end{defin}

\begin{propo}
Let $\Ac$ be a crystallographic arrangement. Then $\Ac={}^R\Ac$
for some root set $R$ and
\[ {}^{{}^\alpha R}\Ac=\Ac^{\alpha^\perp}\]
for all $\alpha\in R$.
\end{propo}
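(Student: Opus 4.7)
The plan is to derive both assertions from Definition \ref{muiota} together with the correspondence of Section \ref{gen_res}. The equality $\Ac={}^R\Ac$ is immediate from Definition \ref{cryarr}: the defining datum of a crystallographic arrangement is a root set $R$ with $\Ac=\{\alpha^\perp\mid\alpha\in R\}$, which is precisely what ${}^R\Ac$ denotes. Via the correspondence between crystallographic arrangements and Cartan schemes, one may take $R=R^a$ for some object $a$ of the associated Cartan scheme.

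For the second assertion I fix $\alpha\in R$ and set $\mu:=\mu_{R^a}(\alpha)$ and $\iota:=\iota_{R^a}(\alpha)$. Since $\mu$ is a morphism in the Weyl groupoid, it acts as a linear isomorphism on the ambient space, carries $R^a$ bijectively onto $\mu(R^a)$, and by construction sends $\alpha$ to the simple root $\alpha_\iota$. Under this change of basis, $\Ac$ is identified with ${}^{\mu(R^a)}\Ac$ and $\alpha^\perp$ with $\alpha_\iota^\perp$, and since restriction commutes with linear isomorphisms of the ambient space, it remains to identify $({}^{\mu(R^a)}\Ac)^{\alpha_\iota^\perp}$ with ${}^{{}^\alpha R}\Ac$ as arrangements in $\alpha_\iota^\perp\cong\RR^{r-1}$, using the basis $\alpha_1,\ldots,\widehat\alpha_\iota,\ldots,\alpha_r$.

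The final identification I would read off from a coordinate computation: for $\beta=\sum_{i=1}^r\lambda_i\alpha_i\in\mu(R^a)$, the intersection $\beta^\perp\cap\alpha_\iota^\perp$ is a proper hyperplane in $\alpha_\iota^\perp$ exactly when $\sum_{i\ne\iota}\lambda_i\alpha_i\ne 0$ (which, by $\RR\alpha_\iota\cap\mu(R^a)=\{\pm\alpha_\iota\}$, excludes only $\beta=\pm\alpha_\iota$), and this hyperplane is cut out equally well by the primitive representative $\frac{1}{g_\iota(\lambda_1,\ldots,\lambda_r)}\sum_{i\ne\iota}\lambda_i\alpha_i$; letting $\beta$ range over $\mu(R^a)$ subject to this nondegeneracy condition produces on one hand the defining set ${}^\alpha R$ and on the other hand all hyperplanes of the restriction. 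I do not foresee a genuine obstacle: the only point deserving care is that ${}^\alpha R$, being a set, automatically collapses any two roots $\beta,\beta'$ with $\beta-\beta'\in\RR\alpha_\iota$ (and in particular $\pm\gamma$) to a single element, which matches the collapse that occurs when restricting the corresponding hyperplanes. The main bookkeeping is thus just to track how the base change $\mu$ interacts with the passage from $\RR^r$ to $\RR^{r-1}$.
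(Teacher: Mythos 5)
Your argument is correct and follows the same route as the paper's own (much terser) proof: apply a Weyl-groupoid morphism $\mu_{R^a}(\alpha)$ to make $\alpha$ simple, observe that restriction commutes with the base change, and then identify the restriction to $\alpha_{\iota}^{\perp}$ with erasing the $\iota$-th coordinate and normalizing by the gcd, exactly as in Definition~\ref{muiota}. Your write-up just makes explicit the coordinate computation and the collapsing of proportional projections that the paper leaves implicit.
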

\begin{proof}
We use the correspondence to \textsc{Weyl} groupoids:
Up to a base change, $\Ac=\{\alpha^\perp \mid \alpha\in R^a_+\}$ where $R^a_+$
is the set of real roots at an object $a$ of a \textsc{Cartan} scheme.
Now we use maps $\mu,\iota$ as in Def.\ \ref{muiota}.
Explicitly, $\Ac^{\alpha^\perp}$ is the arrangement given by $R^a_+$ where we erase the
$\iota(\alpha)$-th coordinate, collect the resulting vectors and possibly divide them by the
greatest common divisor of their coordinates.
\end{proof}

We also need the following proposition which is a corollary to the classification:
\begin{propo}\label{cryrescry}
Let $\Ac$ be a crystallographic arrangement and $H\in\Ac$. Then
$\Ac^H$ is crystallographic.
\end{propo}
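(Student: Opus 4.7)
The plan is to reduce to the case where the distinguished root is simple and then appeal directly to the classification. By the previous proposition we have $\Ac^H = {}^{{}^\alpha R}\Ac$ for any $\alpha \in R$ with $H = \alpha^\perp$. Applying the morphism $\mu_{R^a}(\alpha)$ from Definition 5.1, which is a base change carrying $R$ to another root set of the same \textsc{Cartan} scheme and taking $\alpha$ to a simple root $\alpha_i$ with $i = \iota_{R^a}(\alpha)$, I may assume that $\alpha$ is already simple. In this situation ${}^\alpha R$ is obtained from $R$ by deleting the $i$-th coordinate from every vector, discarding the zeros that arise, and replacing each surviving vector by its primitive integer multiple. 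So the question becomes: does this deletion-and-primitivization operation, applied to a root set in the classification, always yield another root set in the classification?

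With the reduction in hand I would invoke Theorem 3.2. In the rank two case the restriction is the empty arrangement on a line, and there is nothing to check. For the infinite series of Section 4, namely the standard types $A_r, B_r, C_r, D_r$ and the sets $\Xi_{r,Z}$, the restrictions can be computed uniformly from the explicit lists of positive roots. One verifies that deleting the coordinate corresponding to a simple root produces, up to relabeling, a disjoint union of smaller root sets from the same list (for example $A$-type restricts to products of $A$-type, and $\Xi_{r,Z}$ restricts to $\Xi_{r-1,Z'}$ possibly multiplied by a standard factor). Since the series admit uniform formulas, this is a finite combinatorial verification.

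The remaining case is the $74$ sporadic \textsc{Cartan} schemes. For each one I would enumerate its simple roots, compute ${}^{\alpha_i} R$ for each $i$ as above, and match the resulting set against the stored list of crystallographic arrangements of rank $r-1$ provided by the same classification. The main obstacle is computational rather than conceptual: the sporadic root systems of rank $7$ and $8$ are large, so one needs an efficient implementation of primitivization and of the equivalence test used to recognize a root set up to base change. Once these are in place, the verification terminates, and combined with the trivial rank two case and the series computation this gives a complete case-by-case proof that $\Ac^H$ is again crystallographic.
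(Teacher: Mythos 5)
Your proposal follows essentially the same route as the paper: reduce to checking that the restricted set ${}^\alpha R$ is again a root set (after moving $\alpha$ to a simple root via the groupoid morphisms), verify the infinite series by explicit computation on the listed positive roots, and handle the $74$ sporadic cases by computer. The paper's proof is exactly this case-by-case appeal to the classification, so your argument matches it in substance.
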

\begin{proof}
Let $R$ be a root set with $\Ac={}^R\Ac$ and $\alpha\in R$ such that
$H=\alpha^\perp$. It suffices to check that ${}^\alpha R$ is a root set.
We use the classification.
For the $74$ sporadic crystallographic arrangements we use the computer.
The infinite series are easy to treat:
Restricting from type $A$, $B$ yields type $A$, $B$ respectively.
Restricting from type $C$ gives type $C$ except for one coordinate which
yields type $B$.

For type $D$ or the arrangements from $\Xi_{r,Z}$, $\Psi_{r,Y}$, $\Psi'_{r,Y}$ one
checks all restrictions:
In the standard ordering, restricting a root system of type $D_r$ to
$\alpha_1^\perp,\ldots,\alpha_{r-2}^\perp$ gives a root set of the form $\Xi_{r-1,Z}$
and restricting to $\alpha_{r-1}^\perp$ or $\alpha_r^\perp$ gives a root set of the form $\Psi_{r-1,Y}$.
Thus the only roots one has to consider when restricting from $\Xi_{r,Z}$, $\Psi_{r,Y}$ or
$\Psi'_{r,Y}$ are those parametrized by $Z$ resp.\ $Y$. But it is easy to check in each
case that one again obtains such a set.
\end{proof}

Remark that Prop.\ \ref{cryrescry} is only used in the algorithm and in
Cor.\ \ref{cryherfree}, where we state that the crystallographic arrangements
are hereditarily inductively free.

Let $L$ be a global variable which is a sequence of inductively free
arrangements $\Ac$ stored as pairs $(\Lambda,\exp \Ac)$ where $\Ac={}^\Lambda\Ac$.
The following algorithm treats almost all sporadic crystallographic arrangements:

\algo{IsInductivelyFree}{$r$,$R_0$,$R_1$,$e_0$,$\hat R$,$\mu$}
{Test if $R_1$ is inductively free}
{$r\in\NN$, $R_0\subseteq R_1\subseteq\NN_0^r$, $e_0=\exp {}^{R_0}\Ac$,
a root set $\hat R$ with $R_1\subseteq \hat R$, a map
$\mu=\mu_{\hat R} : \hat R \rightarrow \GL(\ZZ^r)$.}
{{\bf True} or {\bf False}, a sequence of exponents if {\bf True}.}
{
\item If $|R_0|=|R_1|$ then return ({\bf True}, $e_0$).
\item Set $F\leftarrow R_1\backslash R_0$. Sort $F$ in a ``good'' way: Call
{\bf Heuristic\-Good\-Ordering$(F)$} (see below).
\item For each $\alpha\in F$, perform steps \ref{IIFstep0} to \ref{IIFstepe}.
\item\label{IIFstep0} Set $R_2=R_0\cup\{\pm \alpha\}$. Let $R:=\mu(\alpha)(R_2)$ and
compute $R'':={}^{\mu(\alpha)(\alpha)}R$.
\item Initialize $e\leftarrow\emptyset$.
\item\label{IIFstep1} If the rank of $R''$ is $2$, then $e:=(1,|R''|-1)$, go to step \ref{IIFstepe}.
\item If $R''\in L$, then $R''$ is known to be inductively free, set $e:=\exp {}^{R''}\Ac$, go to step \ref{IIFstepe}.
\item If $|R''|=r-1$ and the rank of $R''$ is $r-1$ then set $e:=(1,\ldots,1)$ ($|e|=r$), go to step \ref{IIFstepe}.
\item Search in $L$ for a largest set $\Lambda$ with $\Lambda\subseteq R''$. If there is
no such set, then choose a linearly independent $\Lambda\in R''$ with $R''\subseteq\langle\Lambda\rangle$.
\item Let $\hat R'':={}^{\alpha}\hat R$. By Prop.\ \ref{cryrescry}, $\hat R''$ is crystallographic
and we obtain a new map $\mu_{\hat R''}$.
\item Call {\bf IsInductivelyFree}($r-1$, $\Lambda$, $R''$, $\exp{}^\Lambda\Ac$, $\hat R''$, $\mu_{\hat R''}$).
If $R''$ is inductively free, then include it with its exponents into $L$ and set $e:=\exp {}^{R''}\Ac$.
\item\label{IIFstepe} If $|e|>0$ and $e\subseteq e_0$, then $(R_0,R_2,R'')$ is a triple of arrangements
and thus by Thm.\ \ref{adddel} we know that $R_2$ is inductively free and we know $\exp {}^{R_2}\Ac$.
Call {\bf IsInductivelyFree}($r$, $R_2$, $R_1$, $\exp {}^{R_2}\Ac$, $\hat R$, $\mu$) and
return the result ({\bf True},$\exp {}^{R_1}\Ac$) if it is {\bf True}.
\item Return ({\bf False},$\emptyset$).
}

\begin{remar}
To use the above algorithm to show that all sporadic crystallographic arrangements
are inductively free, we start with the arrangements of rank three and continue up to rank
seven. After each call of the function, we store the result in $L$ as well as all the root sets
from other objects of the same \textsc{Weyl} groupoid.\\
The runtime of the algorithm strongly depends on good hash and search functions for $L$.
Further, when looking for an arrangement in $L$, we also consider arrangements with
permuted coordinates. So we need a good function that recognizes whether two matrices are
equal up to permutations of columns and rows.
\end{remar}
\begin{remar}
We also keep track of pairs $(R_0,R_1)$ for which no inductively free chain from ${}^{R_0}\Ac$ to
${}^{R_1}\Ac$ was found during the algorithm to avoid testing them again in future. As for $L$, we need
good hash functions and perform all tests up to permutations of rows and columns.
\end{remar}

The following function has proven to give good orderings (although we have to admit
that we do not know why).

\algo{HeuristicGoodOrdering}{$F$}
{Sort $F=\{\beta_1,\ldots,\beta_n\}$ in such a way that
$({}^{\{\beta_1\}}\Ac,\ldots,{}^{\{\beta_1,\ldots,\beta_n\}}\Ac)$ is hopefully
almost (up to very few transpositions) an inductive chain}
{$F\subseteq \NN_0^r$.}
{An ordering $F=\{\beta_1,\ldots,\beta_n\}$.}
{
\item $T \leftarrow F$.
\item Compute a graph $\Gamma$ having $1,\ldots,r$ as vertices and for which $(i,j)$ is
an edge if and only if $\alpha_i+\alpha_j\in T$ (this is almost the Dynkin diagram of $T$
when $T$ is a root system).
\item If possible, choose a path $i_1,\ldots,i_r$ in $\Gamma$ that passes each vertex exactly once.
\item Permute the coordinates of the elements of $T$ to $i_1,\ldots,i_r$.
\item Sort $T$ lexicographically and apply the same exchanges to $F$.
}

Algorithm 5.4 works very well for all crystallographic arrangements of
rank up to $7$ except for the largest arrangement $\Ac_{7,2}$ of rank $7$ with $91$ hyperplanes.
After several experiments one also finds a good ordering for $\Ac_{7,2}$:
\begin{figure}
\begin{minipage}{0.8\textwidth}
\begin{tiny}
\noindent
\hspace{-31pt} $F_+ := \{
(0,0,0,0,0,0,1),(0,0,0,0,0,1,0),(0,0,0,0,1,0,0),(0,0,0,1,0,0,0),(0,0,1,0,0,0,0), \vspace{-5pt} \\
(0,1,0,0,0,0,0),(1,0,0,0,0,0,0),(0,0,0,0,0,1,1),(0,0,0,0,1,0,1),(0,0,0,0,1,1,1), \vspace{-5pt} \\
(0,0,0,0,1,1,0),(0,0,0,1,0,0,1),(0,0,0,1,0,1,1),(0,0,0,1,1,1,1),(0,0,0,1,1,0,1), \vspace{-5pt} \\
(0,0,0,1,1,1,2),(0,0,1,0,0,1,0),(0,0,1,0,0,1,1),(0,0,1,0,1,1,1),(0,0,1,0,1,1,0), \vspace{-5pt} \\
(0,0,1,0,1,2,1),(0,0,1,1,1,1,1),(0,0,1,1,1,2,2),(0,0,1,1,0,1,1),(0,0,1,1,1,1,2), \vspace{-5pt} \\
(0,0,1,1,1,2,1),(0,0,1,1,2,2,2),(0,1,0,1,0,0,0),(0,1,0,1,0,0,1),(0,1,0,1,0,1,1), \vspace{-5pt} \\
(0,1,0,1,1,1,1),(0,1,0,1,1,0,1),(0,1,0,1,1,1,2),(0,1,0,2,1,1,2),(0,1,1,1,1,1,1), \vspace{-5pt} \\
(0,1,1,1,1,2,2),(0,1,1,2,1,2,2),(0,1,1,1,0,1,1),(0,1,1,1,1,2,1),(0,1,1,2,2,2,3), \vspace{-5pt} \\
(0,1,1,1,1,1,2),(0,1,1,1,2,2,2),(0,1,1,2,1,1,2),(0,1,1,2,1,2,3),(0,1,1,2,2,2,2), \vspace{-5pt} \\
(0,1,1,2,2,3,3),(0,1,2,2,2,3,3),(1,1,0,0,0,0,0),(1,1,0,1,0,0,0),(1,1,0,1,0,0,1), \vspace{-5pt} \\
(1,1,0,1,0,1,1),(1,1,0,1,1,1,1),(1,1,0,1,1,0,1),(1,1,0,1,1,1,2),(1,1,0,2,1,1,2), \vspace{-5pt} \\
(1,1,1,1,1,1,1),(1,1,1,1,1,2,2),(1,1,1,2,1,2,2),(1,1,1,1,0,1,1),(1,1,1,2,2,2,3), \vspace{-5pt} \\
(1,1,1,1,1,1,2),(1,1,1,1,1,2,1),(1,1,1,1,2,2,2),(1,1,1,2,1,1,2),(1,1,1,2,1,2,3), \vspace{-5pt} \\
(1,1,1,2,2,2,2),(1,1,1,2,2,3,3),(1,1,2,2,2,3,3),(1,2,1,2,1,2,2),(1,2,1,2,2,2,3), \vspace{-5pt} \\
(1,2,1,3,2,2,3),(1,2,1,3,2,3,4),(1,2,0,2,1,1,2),(1,2,2,3,2,3,4),(1,2,1,2,1,1,2), \vspace{-5pt} \\
(1,2,1,2,1,2,3),(1,2,1,3,1,2,3),(1,2,1,3,2,2,4),(1,2,2,3,3,4,4),(1,2,1,2,2,2,2), \vspace{-5pt} \\
(1,2,1,2,2,3,3),(1,2,1,3,2,3,3),(1,2,1,3,3,3,4),(1,2,2,2,2,3,3),(1,2,2,3,2,3,3), \vspace{-5pt} \\
(1,2,2,3,2,4,4),(1,2,2,3,3,3,4),(1,2,2,3,3,4,5),(1,2,2,4,3,4,5),(1,3,2,4,3,4,5), \vspace{-5pt} \\
(2,3,2,4,3,4,5)\}$
\end{tiny}
\end{minipage}
\caption{A set of normal vectors for $\Ac_{7,2}$.\label{fig7_91}}
\end{figure}
Let $F_+$ be defined as in Fig.\ \ref{fig7_91}
and denote $\gamma_1,\ldots,\gamma_{91}$ the elements of $F_+$ in the ordering of
Fig.\ \ref{fig7_91}. Then
\[ \{\gamma_i^\perp\mid i=1,\ldots,k\}, \quad k=1,\ldots,91 \]
is an inductive chain of arrangements. The proof is just an application of Algo.\ 5.4
(notice that all computations are now of rank six).

\subsection{\texorpdfstring{The arrangement of type $E_8$}{The arrangement of type E8}}\label{E8_arr}

In rank $8$ there is only one sporadic crystallographic arrangement, the reflection arrangement of type
$E_8$. We will denote this arrangement $\Ac_{8,1}$.
Unfortunately, Algo.\ 5.4 is not good enough for this last case (we stopped it after
a month of computation). So we have to look more closely at the structure of $\Ac_{8,1}$.
Experiments with Algo.\ 5.4 lead to the conjecture that ``{\bf HeuristicGoodOrdering}''
yields indeed an inductive chain for $\Ac_{8,1}$. So the set $R_+$ we will consider
is the one given in Fig.\ \ref{fig8_120} (in this ordering), and $\Ac_{8,1} = {}^{R_+}\Ac$.
\begin{figure}
\begin{minipage}{0.8\textwidth}
\begin{tiny}
\noindent
\hspace{-33pt} $R_+ := \{
(0,0,0,0,0,0,0,1),(0,0,0,0,0,0,1,0),(0,0,0,0,0,1,0,0),(0,0,0,0,1,0,0,0), \vspace{-5pt} \\
(0,0,0,1,0,0,0,0),(0,0,1,0,0,0,0,0),(0,1,0,0,0,0,0,0),(1,0,0,0,0,0,0,0), \vspace{-5pt} \\
(0,0,0,0,0,0,1,1),(0,0,0,0,0,1,1,0),(0,0,0,0,0,1,1,1),(0,0,0,0,1,1,0,0), \vspace{-5pt} \\
(0,0,0,0,1,1,1,0),(0,0,0,0,1,1,1,1),(0,0,0,1,1,0,0,0),(0,0,0,1,1,1,0,0), \vspace{-5pt} \\
(0,0,0,1,1,1,1,0),(0,0,0,1,1,1,1,1),(0,0,1,1,0,0,0,0),(0,0,1,1,1,0,0,0), \vspace{-5pt} \\
(0,0,1,1,1,1,0,0),(0,0,1,1,1,1,1,0),(0,0,1,1,1,1,1,1),(0,1,0,1,0,0,0,0), \vspace{-5pt} \\
(0,1,0,1,1,0,0,0),(0,1,0,1,1,1,0,0),(0,1,0,1,1,1,1,0),(0,1,0,1,1,1,1,1), \vspace{-5pt} \\
(0,1,1,1,0,0,0,0),(0,1,1,1,1,0,0,0),(0,1,1,1,1,1,0,0),(0,1,1,1,1,1,1,0), \vspace{-5pt} \\
(0,1,1,1,1,1,1,1),(0,1,1,2,1,0,0,0),(0,1,1,2,1,1,0,0),(0,1,1,2,1,1,1,0), \vspace{-5pt} \\
(0,1,1,2,1,1,1,1),(0,1,1,2,2,1,0,0),(0,1,1,2,2,1,1,0),(0,1,1,2,2,1,1,1), \vspace{-5pt} \\
(0,1,1,2,2,2,1,0),(0,1,1,2,2,2,1,1),(0,1,1,2,2,2,2,1),(1,0,1,0,0,0,0,0), \vspace{-5pt} \\
(1,0,1,1,0,0,0,0),(1,0,1,1,1,0,0,0),(1,0,1,1,1,1,0,0),(1,0,1,1,1,1,1,0), \vspace{-5pt} \\
(1,0,1,1,1,1,1,1),(1,1,1,1,0,0,0,0),(1,1,1,1,1,0,0,0),(1,1,1,1,1,1,0,0), \vspace{-5pt} \\
(1,1,1,1,1,1,1,0),(1,1,1,1,1,1,1,1),(1,1,1,2,1,0,0,0),(1,1,1,2,1,1,0,0), \vspace{-5pt} \\
(1,1,1,2,1,1,1,0),(1,1,1,2,1,1,1,1),(1,1,1,2,2,1,0,0),(1,1,1,2,2,1,1,0), \vspace{-5pt} \\
(1,1,1,2,2,1,1,1),(1,1,1,2,2,2,1,0),(1,1,1,2,2,2,1,1),(1,1,1,2,2,2,2,1), \vspace{-5pt} \\
(1,1,2,2,1,0,0,0),(1,1,2,2,1,1,0,0),(1,1,2,2,1,1,1,0),(1,1,2,2,1,1,1,1), \vspace{-5pt} \\
(1,1,2,2,2,1,0,0),(1,1,2,2,2,1,1,0),(1,1,2,2,2,1,1,1),(1,1,2,2,2,2,1,0), \vspace{-5pt} \\
(1,1,2,2,2,2,1,1),(1,1,2,2,2,2,2,1),(1,1,2,3,2,1,0,0),(1,1,2,3,2,1,1,0), \vspace{-5pt} \\
(1,1,2,3,2,1,1,1),(1,1,2,3,2,2,1,0),(1,1,2,3,2,2,1,1),(1,1,2,3,2,2,2,1), \vspace{-5pt} \\
(1,1,2,3,3,2,1,0),(1,1,2,3,3,2,1,1),(1,1,2,3,3,2,2,1),(1,1,2,3,3,3,2,1), \vspace{-5pt} \\
(1,2,2,3,2,1,0,0),(1,2,2,3,2,1,1,0),(1,2,2,3,2,1,1,1),(1,2,2,3,2,2,1,0), \vspace{-5pt} \\
(1,2,2,3,2,2,1,1),(1,2,2,3,2,2,2,1),(1,2,2,3,3,2,1,0),(1,2,2,3,3,2,1,1), \vspace{-5pt} \\
(1,2,2,3,3,2,2,1),(1,2,2,3,3,3,2,1),(1,2,2,4,3,2,1,0),(1,2,2,4,3,2,1,1), \vspace{-5pt} \\
(1,2,2,4,3,2,2,1),(1,2,2,4,3,3,2,1),(1,2,2,4,4,3,2,1),(1,2,3,4,3,2,1,0), \vspace{-5pt} \\
(1,2,3,4,3,2,1,1),(1,2,3,4,3,2,2,1),(1,2,3,4,3,3,2,1),(1,2,3,4,4,3,2,1), \vspace{-5pt} \\
(1,2,3,5,4,3,2,1),(1,3,3,5,4,3,2,1),(2,2,3,4,3,2,1,0),(2,2,3,4,3,2,1,1), \vspace{-5pt} \\
(2,2,3,4,3,2,2,1),(2,2,3,4,3,3,2,1),(2,2,3,4,4,3,2,1),(2,2,3,5,4,3,2,1), \vspace{-5pt} \\
(2,2,4,5,4,3,2,1),(2,3,3,5,4,3,2,1),(2,3,4,5,4,3,2,1),(2,3,4,6,4,3,2,1), \vspace{-5pt} \\
(2,3,4,6,5,3,2,1),(2,3,4,6,5,4,2,1),(2,3,4,6,5,4,3,1),(2,3,4,6,5,4,3,2)\}.$
\end{tiny}
\end{minipage}
\caption{A set of normal vectors for $\Ac_{8,1}$.\label{fig8_120}}
\end{figure}

If we write $R_+=\{\beta_1,\ldots,\beta_{120}\}$, then we obtain arrangements
$\Ac_1,$ $\ldots,$ $\Ac_{120}$ where $\Ac_i:=\{\beta_1^\perp,\ldots,\beta_i^\perp\}$ for $i=1,\ldots,120$.
We claim that $(\Ac_1,\ldots,\Ac_{120})$ is an inductive chain of arrangements.
To prove this, we need to check that $\Ac_2^{\beta_2^\perp},\ldots,\Ac_{120}^{\beta_{120}^\perp}$
are inductively free and that the exponents of the triples satisfy the assumptions of
Thm.\ \ref{adddel}.

First notice that each restriction $\Ac_{8,1}^H$, $H\in\Ac_{8,1}$ comes from a root set
of the sporadic finite \textsc{Weyl} groupoid $\Wc$ of rank 7 with $91$ positive roots. So whenever
we consider $\Ac^H$ for some $\Ac\subseteq\Ac_{8,1}$ and $H\in\Ac_{8,1}$, we have an action
of $\Wc$ on the corresponding ``roots'' and in particular the automorphisms of the chosen
object act as well. More precisely:

Let $F$ denote the root set of $\Wc$ for which we know an inductive chain, i.e.\ an ordering
of $F_+=\{\gamma_1,\ldots,\gamma_{91}\}$.
For each $i=1,\ldots,120$, ${}^{\mu(\beta_i)}R_+$ is some set of positive roots for $\Wc$, so
there is an automorphism $\varphi$ with
\[ \varphi({}^{\mu(\beta_i)}R_+\cup -{}^{\mu(\beta_i)}R_+) = F. \]
Now we consider $\Ac_i^{\beta_i^\perp}\subseteq {}^{\mu(\beta_i)}R_+$.
The automorphism group of the object $F$ in $\Wc$ is a subgroup of the symmetric group $S_{182}$
and it acts on $\varphi(\Ac_i^{\beta_i^\perp}\cup -\Ac_i^{\beta_i^\perp})$.
A computation yields:
\begin{lemma}
Let $O_i$ be the orbit of $\varphi(\Ac_i^{\beta_i^\perp}\cup -\Ac_i^{\beta_i^\perp})$
under the action of $\Aut(F)$ for $i=43,\ldots,120$. Then
$O_i=O_j$ whenever $|\Ac_i^{\beta_i^\perp}|=|\Ac_j^{\beta_j^\perp}|$.
\end{lemma}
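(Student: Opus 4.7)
The plan is entirely computational: the statement is a finite check about orbits of a finite group acting on finite subsets, and nothing in it is amenable to a conceptual shortcut beyond reducing the cost of the enumeration. I would split the work into three stages, and the main effort will go into making the orbit test practical rather than correct in principle.

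First, I would realize $\Aut(F) \le S_{182}$ explicitly. Concretely, $\Aut(F)$ is generated by the loops in the Weyl groupoid $\Wc$ based at the object carrying the root set $F$, and each such loop is a product of Cartan morphisms whose action on $F \cup -F$ is already available from the classification computation. Using a standard permutation-group package one computes a base and strong generating set for $\Aut(F)$. This is a once-off precomputation whose cost is independent of $i$.

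Second, for each $i = 43,\ldots,120$ I would build the set $S_i := \varphi\bigl(\Ac_i^{\beta_i^\perp} \cup -\Ac_i^{\beta_i^\perp}\bigr) \subseteq F \cup -F$, where $\varphi$ is obtained from $\mu(\beta_i)$ composed with the unique $\Wc$-morphism sending ${}^{\mu(\beta_i)}R_+$ to $F$. Since $R_+$ and the maps $\mu$ are already stored from the rank-$7$ inductive chain, this is a direct substitution; record each $S_i$ together with its cardinality $n_i := |S_i|/2 = |\Ac_i^{\beta_i^\perp}|$.

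Third, rather than enumerating orbits (which for $i$ close to $120$ can be forbiddingly large, since $|\Aut(F)|$ is sizeable), I would compute for each $S_i$ a canonical representative $\sigma \cdot S_i$ under $\Aut(F)$ by the standard backtrack procedure (partition refinement on the characteristic vector of $S_i$ inside $F \cup -F$, using the base and strong generating set from Stage~1). Two sets lie in the same orbit iff their canonical representatives coincide. The verification of the lemma then reduces to bucketing the indices $i \in \{43,\ldots,120\}$ by $n_i$ and checking that within each bucket all canonical representatives agree.

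The conceptual work is finished once the canonical-form routine is in place; the real obstacle is efficiency. Directly stabilizing $S_i$ in $\Aut(F)$ by backtrack is exponential in the worst case, and the sets with $i$ near $120$ are the troublesome ones because $|S_i|$ is comparable to $182$ and the stabilizer is small, so the search tree for partition refinement is deep. I would mitigate this by feeding the refinement additional invariants (incidence counts of each vector of $F \cup -F$ against pairs, triples, and subspaces of $S_i$) until the discrete partition is reached quickly; these invariants are $\Aut(F)$-invariant by construction and only speed the search up. Once the canonical form survives on the hardest case (which I expect to be the largest $n_i$ that occurs), the remaining buckets are strictly easier and the lemma follows by inspection of the resulting table.
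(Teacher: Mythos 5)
Your proposal is correct and takes essentially the same route as the paper: the lemma is established there purely by a finite computer check (the text reads ``A computation yields:'' with no further argument), and your plan --- realizing $\Aut(F)\le S_{182}$ from the groupoid automorphisms, forming the sets $S_i$, and comparing canonical forms within each cardinality bucket --- is a sound way to organize exactly that computation.
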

\begin{remar}
The lemma is certainly also true for $i<43$ but for these cases Algo.\ 5.4 is good enough.
\end{remar}
Since these automorphims are linear maps, it suffices to check inductive freeness for one
representative of each orbit. These are arrangements with
\[ 42, 46, 49, 51, 52, 58, 60, 61, 65, 66, 68, 74, 75, 77, 80, 84, 90, 91 \]
hyperplanes.
Let $O$ be such an orbit and assume that
$\{\gamma_{i_1},\ldots,\gamma_{i_m}\}$
are the positive elements corresponding to a representative.
We choose this representative in such a way that $\max\{i_1,\ldots,i_m\}$ is minimal.
This way we ensure that the resulting ordering is very close to the prefered ordering for $F$.
Indeed, using all these techniques we can prove the inductive freeness of the reflection
arrangement of type $E_8$ in less than $5$ minutes on a usual PC with {\sf GAP}.

\subsection{Certificates for inductive freeness}

The above algorithm is quite complicated when implemented and it is very hard to
completely exclude coding errors. Therefore we use a second very short and simple
program to check that the results are correct.

\begin{defin}
Let $\Ac=\{H_1,\ldots,H_n\}$ be an inductively free hyperplane arrangement of rank at least $2$.
A {\it certificate for inductive freeness} $C_\Ac$ for $\Ac$ is
\[ C_\Ac = \begin{cases}
2, & \mbox{rank }\Ac = 2\\
((i_1,\ldots,i_n),(C_1,\ldots,C_n)), & \mbox{rank }\Ac > 2
\end{cases}, \]
where $\{i_1,\ldots,i_n\}=\{1,\ldots,n\}$, $\Ac_j:=\{H_{i_1},\ldots,H_{i_j}\}$,
$(\Ac_1,\ldots,\Ac_n)$ is an inductive chain and $C_j$ is a certificate for $\Ac_j^{H_{i_j}}$.
\end{defin}

After several modifications to the above algorithms one obtains as output the
exponents and a certificate as well. We can then check the certificate via the following:

\algo{CheckCertificate}{$\Ac,C$}
{Check whether $C$ is a certificate for $\Ac$}
{A hyperplane arrangement $\Ac$, an object $C$}
{Exponents of $\Ac$ or {\bf False}.}
{
\item Let $r$ be the rank of $\Ac$.
\item If $r=2$ then return $(1,|\Ac|-1)$.
\item Denote $\Ac=(H_1,\ldots,H_n)$ and $C=((i_1,\ldots,i_n),(C_1,\ldots,C_n))$.
\item $e\leftarrow (1,\ldots,1)$ ($r$-times).
\item For $j$ from $r+1$ to $|\Ac|$ do steps \ref{CCst1} to \ref{CCstl}.
\item \label{CCst1} $\tilde \Ac \leftarrow \{H_{i_1},\ldots,H_{i_{j-1}}\}^{H_{i_j}}$.
\item $\tilde e \leftarrow ${\bf CheckCertificate}$(\tilde A,C_{j-r})$.
\item \label{CCstl} If $\tilde e\ne${\bf False} and $\tilde e\subseteq e$ then
      $e \leftarrow \tilde e \cup (j-|\Ac|)$, else return {\bf False}.
\item Return $e$.
}

\begin{remar}
The ``$\cup$''-symbol in step \ref{CCstl} is a union of multisets, i.e.\ $e$, $\tilde e$ are
in fact sets with multiplicities.
\end{remar}
\begin{remar}
Step \ref{CCst1} is the time consuming part. Since we are only dealing with small
integers this is a function which is very easy to implement in {\sf C}.
A certificate for the arrangement $\Ac_{8,1}$ takes depending on the format between
$300$KB and $500$MB.\\
{\bf CheckCertificate} takes about $15$ minutes for $\Ac_{8,1}$ using {\sf GAP} with
a dynamic {\sf C}-module for restrictions.
It may seem surprising that it takes longer to check the certificate than to create it.
There are two reasons for this: First, Algo.\ 5.4 descends only once into each branch
it has already computed to be inductively free (remember the global variable $L$).
Secondly, in Algo.\ 5.4 we use the information on \textsc{Weyl} groupoids and morphisms and can
therefore always restrict to a simple root which amounts to erasing a
coordinate and compute gcd's. In {\bf CheckCertificate} we want to keep everything as short
as possible and in particular we transfer no information on the morphisms.
\end{remar}
Certificates for the sporadic crystallographic arrangements are available at \cite{indcert}.

\subsection{\texorpdfstring{The arrangements of type $H_3$ and $H_4$}{The arrangements of type H3 and H4}}\label{ifH3H4}

To prove that all \textsc{Coxeter} arrangements are inductively free, we still need to compute
certificates for the non-crystallographic cases. The case of rank two being trivial,
there are two arrangements left, the arrangements of type $H_3$ and $H_4$.
Fortunately these cases are of rank three and four and are small enough to be treated by
a generic version of Algo.\ 5.4 which does not use the structure of the groupoids.

\subsection{Summary}

\begin{theor}\label{thm:cryarrindfree}
Crystallographic arrangements are inductively free.
\end{theor}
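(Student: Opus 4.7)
The plan is to invoke the classification of crystallographic arrangements stated in the theorem of Section \ref{gen_res} and dispatch each class of the classification separately. Since inductive freeness is preserved under taking direct products (the inductive chains concatenate and the exponent sets add), it suffices to handle the irreducible crystallographic arrangements. These split into three families: rank two, the infinite series of higher rank, and the $74$ sporadic Cartan schemes.

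For rank two, every simplicial arrangement of rank two is inductively free: starting from the empty arrangement of rank $2$ (exponents $(0,0)$) and adding hyperplanes one by one, each restriction $\Ac^H$ is a point, hence has exponents $\{1\}$, which is always contained in $\exp \Ac'$ since the latter is of the form $(1, k)$ after the first hyperplane is added. So rank two is immediate. The higher-rank infinite series of the classification (types $A$, $B$, $C$, $D$, and the families $\Xi_{r,Z}$, $\Psi_{r,Y}$, $\Psi'_{r,Y}$) are exactly what Proposition \ref{prop:series_indfree} treats: after the base change described there, each such arrangement is of the form $D_r^k$ studied by Jambu and Terao, hence inductively free. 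So for the series, one simply cites Proposition \ref{prop:series_indfree}.

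The hard part is the $74$ sporadic Cartan schemes. Here the strategy is to process them by increasing rank from three up to eight, using Algorithm~5.4 (\textbf{IsInductivelyFree}) combined with the dictionary of previously-discovered arrangements stored in the global list $L$. The crucial input is Proposition \ref{cryrescry}: any restriction $\Ac^H$ of a crystallographic arrangement is again crystallographic, so the Weyl groupoid structure propagates down through the recursion, and at each step we may transport via $\mu(\alpha)$ so that $\alpha$ becomes simple and the restriction is obtained by deleting a coordinate and dividing by gcds. With the heuristic ordering of Algorithm \textbf{HeuristicGoodOrdering}, all sporadic arrangements in ranks three through seven (including $F_4$, $E_6$, $E_7$) are handled directly; the rank-$7$ arrangement $\Ac_{7,2}$ with $91$ hyperplanes requires a hand-tuned ordering given explicitly in Figure \ref{fig7_91}.

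The main obstacle is the reflection arrangement of type $E_8$, i.e.~$\Ac_{8,1}$ in rank eight with $120$ hyperplanes. A naive application of Algorithm~5.4 does not terminate in reasonable time. The way around it is to fix the ordering of $R_+$ given in Figure \ref{fig8_120} and reduce the verification to showing that each restriction $\Ac_i^{\beta_i^\perp}$ is inductively free and that its exponents are contained in those of $\Ac_{i-1}$. All these restrictions live inside the sporadic rank-$7$ Weyl groupoid $\Wc$, on whose simple-root representative $F$ the automorphism group $\Aut(F)$ acts, and by the Lemma of Section \ref{E8_arr} the restrictions fall into at most $18$ orbits under this action. Since the elements of $\Aut(F)$ are linear, inductive freeness of one representative per orbit implies it for the whole orbit, and each representative reuses the inductive chain already computed for $\Ac_{7,2}$. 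Finally, to protect against implementation errors, the certificates produced along the way are independently verified by the short Algorithm \textbf{CheckCertificate}. Combining these steps with Proposition \ref{prop:series_indfree} and the rank-two argument completes the proof. \qed
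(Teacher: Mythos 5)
Your proposal follows the paper's proof exactly: the paper's argument for this theorem is precisely Proposition \ref{prop:series_indfree} for the non-sporadic series together with the computation of Section \ref{wg_sporadic} (Algorithm 5.4, the hand-tuned ordering for $\Ac_{7,2}$, and the orbit reduction under $\Aut(F)$ for $E_8$), which is what you describe. Your explicit treatment of the rank-two case and the reduction to irreducible arrangements via products are harmless elaborations of points the paper leaves implicit.
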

\begin{proof}
This is Prop.\ \ref{prop:series_indfree} and a computation with Algo.\ 5.4.
\end{proof}

\begin{corol}\label{cryherfree}
Crystallographic arrangements are hereditarily inductively free.
\end{corol}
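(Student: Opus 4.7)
The plan is to induct on the codimension of $X \in L(\Ac)$ inside the ambient space. Recall that hereditary inductive freeness means that $\Ac^X$ is inductively free for every $X \in L(\Ac)$, so this reduces the corollary to an iterated-restriction argument. The base case is $X = V$, where $\Ac^V = \Ac$ is inductively free by Theorem \ref{thm:cryarrindfree}.

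For the inductive step, given $X \in L(\Ac)$ of positive codimension, I would pick any hyperplane $H \in \Ac$ containing $X$ (one exists because $X \ne V$). Proposition \ref{cryrescry} guarantees that $\Ac^H$ is again a crystallographic arrangement, now inside the ambient space $H$. The flat $X$ still lies in $L(\Ac^H)$, but its codimension in $H$ is one smaller than its codimension in $V$, so the inductive hypothesis applied to the crystallographic pair $(\Ac^H, X)$ yields that $(\Ac^H)^X$ is inductively free.

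The last step is the identification $(\Ac^H)^X = \Ac^X$, valid for any central arrangement and any flat $X \subseteq H$. Unwinding the definitions, both sides coincide with $\{X \cap H' \mid H' \in \Ac,\; X \not\subseteq H'\}$: the hyperplane $H$ itself is excluded on both sides because $X \subseteq H$, and all nonemptiness conditions $X \cap H' \ne \emptyset$ and $H \cap H' \ne \emptyset$ are automatic since the hyperplanes are linear (they all contain the origin). This identification closes the induction.

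I do not expect any serious obstacle. The one substantive ingredient beyond Theorem \ref{thm:cryarrindfree} is the closure of the crystallographic property under a single restriction, and that is precisely Proposition \ref{cryrescry}, whose proof already absorbs the classification-based case analysis. Everything else is formal manipulation of the definitions of $\Ac^H$ and $\Ac^X$, so the corollary follows immediately once Theorem \ref{thm:cryarrindfree} and Proposition \ref{cryrescry} are in hand.
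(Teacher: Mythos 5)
Your proof is correct and is essentially the paper's own argument: the paper writes $X=H_1\cap\cdots\cap H_k$ and applies Proposition \ref{cryrescry} $k$ times to conclude $\Ac^X$ is crystallographic, hence inductively free by Theorem \ref{thm:cryarrindfree}. Your induction on codimension together with the identification $(\Ac^H)^X=\Ac^X$ is just a more explicit packaging of that same iteration.
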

\begin{proof}
Let $\Ac$ be a crystallographic arrangement and $X\in L(\Ac)$. Then
$X=H_1\cap\ldots\cap H_k$ for certain hyperplanes $H_1,\ldots,H_k\in\Ac$.
Applying Prop.\ \ref{cryrescry} $k$-times, we obtain that $\Ac^X$ is
crystallographic and thus inductively free by Thm.\ \ref{thm:cryarrindfree}.
\end{proof}

\begin{corol}
\textsc{Coxeter} arrangements are inductively free.
\end{corol}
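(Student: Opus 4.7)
The plan is to reduce the claim to the irreducible case and then invoke the classification of finite Coxeter groups together with the results already established in the paper. Recall that a product $\Ac_1 \times \Ac_2$ of inductively free arrangements is inductively free (this is a standard consequence of the Addition--Deletion theorem, proved by adjoining hyperplanes of $\Ac_2$ one at a time to $\Ac_1 \times \emptyset$). Hence it suffices to prove the statement for irreducible Coxeter arrangements, since every Coxeter arrangement decomposes as a product of irreducible ones according to the irreducible factors of its Coxeter group.

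Next, I would appeal to the classification of finite irreducible Coxeter groups, which splits into: the crystallographic types $A_r$, $B_r$, $C_r$, $D_r$, $E_6$, $E_7$, $E_8$, $F_4$, $G_2$; the rank-two dihedral types $I_2(m)$; and the two exceptional non-crystallographic types $H_3$ and $H_4$. For the crystallographic types, the reflection arrangement is a crystallographic arrangement in the sense of Definition \ref{cryarr} (as remarked in the introduction), so inductive freeness follows immediately from Theorem \ref{thm:cryarrindfree}.

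For the dihedral arrangements $I_2(m)$ of rank two, inductive freeness is essentially trivial: any arrangement of $n$ lines through the origin in $\RR^2$ can be built up one line at a time, where each restriction has a single point (a rank-one arrangement, inductively free with exponent $1$), and one checks directly that the exponents $\{1,k-1\}$ at stage $k$ contain the required restriction exponent $\{1\}$, so Addition--Deletion applies at every step. The remaining two cases $H_3$ and $H_4$ are exactly the ones handled by the computer calculation described in Section \ref{ifH3H4}, using the generic version of Algorithm 5.4 that does not rely on the Weyl-groupoid structure.

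The substantive work has already been done elsewhere in the paper: the crystallographic cases rest on the rather involved computer proof culminating in Theorem \ref{thm:cryarrindfree} (and in particular the delicate $E_8$ analysis of Section \ref{E8_arr}), while the non-crystallographic cases $H_3$ and $H_4$ depend on the computations of Section \ref{ifH3H4}. The expected main obstacle is therefore not in the argument itself, which is a short case check, but in ensuring that the enumeration of irreducible Coxeter arrangements is complete and that the claimed inclusions of exponent sets in the rank-two case are stated carefully enough to appeal directly to Theorem \ref{adddel}; both are routine to verify.
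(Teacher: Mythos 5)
Your proposal is correct and follows essentially the same route as the paper, whose proof simply cites Theorem \ref{thm:cryarrindfree} for the crystallographic types and Subsection \ref{ifH3H4} for $H_3$ and $H_4$ (with the rank-two case dismissed there as trivial). You merely make explicit the reduction to irreducible factors and the easy dihedral induction, both of which the paper leaves implicit.
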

\begin{proof}
This is Thm.\ \ref{thm:cryarrindfree} and Subsection \ref{ifH3H4}.
\end{proof}

\section{\texorpdfstring{Freeness of the graded module $D(\Ac)$}{Freeness of the graded module D(A)}} \label{freeness_homalg}

In this section we describe algorithms to compute a free basis of $D(\Ac)$ for a free \emph{central} arrangement $\Ac$. These algorithms can also be used to \emph{decide} the freeness of finitely presented \emph{graded} $S$-modules and, in particular, the freeness of central arrangements.

The first two subsections describe well-known algorithms. The algorithm in the third subsection enabled us to compute a free basis for the $E_8$-arrangement given as the product of $120$ sparse matrices.

\subsection{\texorpdfstring{$D(\Ac)$ as a kernel}{D(A) as a kernel}} \label{as_a_kernel}

Expressing $D(\Ac)$ as a kernel of an $S$-module homomorphism allows the use of \textsc{Gröbner} basis techniques to compute a set of generators $D(\Ac)$ as a subset of the free module $\Der(S)$ of rank $r$, the latter being identified with $S^{1\times r}$ using the standard basis $(\theta_i := \frac{\partial}{\partial x_i}\mid i=1\ldots r)$.

By definition, $D(\Ac)$ is the kernel of the map
\[
  \psi_\Ac: \left\{ \begin{array}{ccc} \Der(S) &\to& S/\langle Q(\Ac) \rangle_S \\ \theta &\mapsto& \theta(Q(\Ac)) + \langle Q(\Ac) \rangle_S \end{array} \right. .
\]
If $Q(\Ac)$ is a complicated polynomial of large degree the \textsc{Gröbner} basis computations quickly become unfeasible. Fortunately, there exists an alternative description of $D(\Ac)$ as a kernel of some map $\phi_\Ac$ and it turns out that various \textsc{Gröbner} basis implementations scale much better when performing this kernel computation. First recall the identity \cite[Prop.~4.8]{OT}
\[
 D(\Ac) = \bigcap_{H\in\Ac} D(\alpha_H)
\]
expressing the module of $\Ac$-derivations as the intersection of $|\Ac|$ free submodules of $\Der(S)$ with
\[
D(\alpha_H):=\{\theta\in\Der(S)\mid \theta(\alpha_H)\in \langle \alpha_H \rangle_S\} = \operatorname{ker}\phi_H,
\]
where $\phi_H$ is the $S$-module map
\[
  \phi_H: \left\{ \begin{array}{ccc} \Der(S) &\to& S/\langle \alpha_H \rangle_S \\ \theta &\mapsto& \theta(\alpha_H) + \langle \alpha_H \rangle_S \end{array} \right.
\]
between $\Der(S)$ and the cyclic torsion module $S/\langle \alpha_H \rangle_S$. The intersection $D(\Ac)$ of these kernels can now be computed as the kernel of the product map $\phi_\Ac := \prod_{H\in \Ac} \phi_H$
\[
  \phi_\Ac: \Der(S) \to T_\Ac
\]
with values in the torsion $S$-module
\[
 T_\Ac := \prod_{H\in \Ac} S/\langle \alpha_H \rangle_S=S^n / \prod_{H\in \Ac} \langle \alpha_H \rangle_S.
\]

With respect to the standard generating system $(\bar{e}_j\mid j = 1 \ldots n)$ of $T_\Ac=S^n / \prod_{H\in \Ac} \langle \alpha_H \rangle_S$ we identify
\[
  T_\Ac \equiv \operatorname{coker}\left( S^{1\times n} \xrightarrow{t_\Ac} S^{1\times n} \right),
\]
where $t_\Ac$ is the $n\times n$ diagonal matrix $(\delta_{ij} \alpha_{H_j})$ with diagonal entries. The map $\phi_\Ac$ can be represented by the constant coefficients $r\times n$ matrix $f_\Ac=(\theta_i(\alpha_{H_j}))$:
\[
  \phi_\Ac: S^{1\times r} \xrightarrow{f_\Ac} \operatorname{coker}\left( S^{1\times n} \xrightarrow{t_\Ac} S^{1\times n} \right),
\]
with $\Der(S)$ identified with $S^{1\times r}$ as above.

Computing $D(\Ac)$ as the kernel of $\phi_A$ amounts to determining a generating set of solutions of the homogeneous linear system of equations over $S$
\[
  \chi f_\Ac + \eta t_\Ac = 0,
\]
or equivalently
\begin{eqnarray}\label{hom_eq}
  \left( \begin{array}{c|c} \chi & \eta \end{array} \right)
  \left( \begin{array}{c} f_\Ac \\ \hline t_\Ac \end{array} \right) = 0,
\end{eqnarray}
with $\chi\in S^{1\times r}$ and $\eta\in S^{1\times n}$. It follows that $\left\{(X_1 \mid \eta_1),\ldots,(X_q \mid \eta_q)\right\}$ is a generating set of solutions of (\ref{hom_eq}) iff $\{X_1,\ldots,X_q\}$ is a generating set of $D(\Ac)$ as a subspace of $\Der(S)\equiv S^{1\times r}$. A generating set of solutions is thus nothing but the rows of a matrix $(X\mid Y)\in S^{q\times (r+n)}$ of row syzygies of the matrix $\left( \begin{array}{c} f_\Ac \\ \hline t_\Ac \end{array} \right)$ and, as such, can be computed using a modern computer algebra system supporting \textsc{Gröbner} basis. Most such systems even provide faster procedures to compute $X$ without computing (a normal form of) $Y$ explicitly. The desired matrix $X$ is called the matrix of relative row syzygies of $f_\Ac$ modulo $t_\Ac$. Summing up: The rows $(X_1,\ldots,X_q)$ of $X$ generate $D(\Ac) \leq \Der(S)\equiv S^{1\times r}$ .

If $\Ac$ is central then all modules in this section are graded, all maps are graded of degree $0$, and the relative syzygies algorithm will produce a matrix $X$ with homogeneous rows $(X_1,\ldots,X_q)$.

\subsection{\texorpdfstring{Deciding the freeness of the graded submodule $D(\Ac)$}{Deciding the freeness of the graded submodule D(A)}}

Since $T_\Ac$ is torsion and hence of rank zero the short exact sequence
\[
  0 \to D(\Ac) \to \Der(S) \xrightarrow{\phi_\Ac} \operatorname{im}\phi_\Ac \to 0
\]
implies that $\operatorname{rank}_S D(\Ac) =\operatorname{rank}_S \Der(S) = r$, by the additivity of the rank. This means that constructing a generating set of $D(\Ac)$ with $r$ elements implies the freeness of $D(\Ac)$.

But since the number $q$ of computed generators of $D(\Ac)$ will generally exceed the rank $r$, the above argument cannot be directly applied and one needs another way to decide the freeness $D(\Ac)$.

The \textsc{Quillen-Suslin} theorem states that the freeness of an $S$-module ($S=k[x_1,\ldots,x_r]$) is equivalent to its projectiveness. But an algorithm to decide projectiveness has usually a major drawback. It does not produce a free basis.

In any case, all these algorithms take a presentation matrix of the module as their input (see \cite[§3.4]{BL} for a short survey). In our situation, where the submodule $D(\Ac)$ is only given by a set of generators $\{X_1,\ldots,X_q\}\subset S^{1\times r}$, this means that we would still need to compute a generating set of $S$-relations among the $q$ generators before entering any of these algorithms. These relations are again computable as the rows of a matrix $Z\in S^{p\times q}$ of row syzygies of the matrix $X$, and $D(\Ac)\cong\operatorname{coker}\left(S^{1 \times p} \xrightarrow{Z} S^{1\times q} \right)$.

In the large examples of interest to us the presentation matrix $Z$ usually contains huge entries. An algorithms that performs nontrivial operations on $Z$ would significantly be slowed down by the size of such entries. So it would be desirable to have an algorithm that only uses $Z$ in the cheapest possible way.

There does exist an algorithm that uses $Z$ in a very cheap way to detect obsolete rows in $X$, i.e.\  the redundant generators of $D(\Ac)$ among the rows of $X$. And fortunately, in the graded case this leads to an algorithm deciding freeness.

For the rest of the subsection let $S=\bigoplus_{i=0}^\infty S_i$ be a positively graded commutative ring with one, finitely generated as an algebra over the field\footnote{A \textsc{Noether}ian local ring $S_0$ would suffice, cf.~\cite[Exercise~20.1]{eis}} $S_0=k$ (i.e.\  $S=k[x_1,\ldots,x_m]/I$, where $I$ is a homogeneous ideal). Denote by $\mathfrak{m}=\bigoplus_{i\geq 1} S_i \triangleleft S$ the unique maximal homogeneous ideal.

\begin{propo}\label{minimal}
  Let $M$ be a graded submodule of the graded free module $S^{1\times r}$, $\mathcal{X}=\{X_1,\ldots,X_q\}$ a finite set of homogeneous generators of $M$, and $X=(X_i)_{i=1,\ldots,q}\in S^{q\times r}$ the matrix with $i$-th row $X_i$. The following conditions are equivalent:
  \begin{enumerate}
     \item $\mathcal{X}$ is minimal, i.e.\  $M$ cannot be generated by a proper subset of $\mathcal{X}$.
     \item All entries of a matrix $Z$ of row syzygies of $X$ lie in $\mathfrak{m}$.
     \item A matrix $Z$ of row syzygies of $X$ with homogeneous entries has no unit entries, i.e.\  no entries in $S_0\setminus\{0\} = k^*$.
  \end{enumerate}
  Moreover, any two minimal set of generators $\mathcal{X}$ and $\mathcal{X}'$ have the same cardinality $q$.
\end{propo}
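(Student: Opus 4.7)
The plan is to leverage the graded form of Nakayama's lemma, which becomes almost tautological in our setting because $S_0=k$ is a field: passing to the graded $k$-vector space $\overline{M}:=M/\mathfrak{m}M$ (finite-dimensional since $M$ is finitely generated), a homogeneous subset $\{X_1,\ldots,X_q\}\subset M$ generates $M$ if and only if the images $\bar X_1,\ldots,\bar X_q$ span $\overline{M}$ over $k$. This is the pillar on which both equivalences and the invariance statement rest.

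First I would dispatch (2) $\Leftrightarrow$ (3). Since $X$ has homogeneous rows, the row syzygy module $\operatorname{syz}(X)\leq S^{1\times q}$ is graded and thus admits a homogeneous generating set. For a homogeneous element of $S$, lying in $\mathfrak{m}$ means having positive degree; being nonzero and \emph{not} in $\mathfrak{m}$ means living in $S_0\setminus\{0\}=k^*$, i.e., being a unit. Hence for a homogeneous syzygy matrix, having all entries in $\mathfrak{m}$ is verbatim the same as having no unit entries. Any possibly inhomogeneous syzygy decomposes into homogeneous pieces which are themselves syzygies, so passing between the two formulations is automatic.

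Next I would prove (1) $\Leftrightarrow$ (3) by contrapositive. If $\mathcal{X}$ is not minimal, some $X_k$ belongs to the submodule generated by the remaining $X_j$, so $X_k=\sum_{j\ne k}s_jX_j$ for some $s_j\in S$. Since $X_k$ is homogeneous, we may truncate each $s_j$ to its homogeneous part of degree $\deg X_k-\deg X_j$, yielding a homogeneous relation that translates into the homogeneous syzygy $(s_1,\ldots,s_{k-1},-1,s_{k+1},\ldots,s_q)$ whose $k$-th entry $-1$ is a unit, negating (3). Conversely, if a homogeneous syzygy $(z_1,\ldots,z_q)$ has a unit entry $z_k\in k^*$, then $X_k=-z_k^{-1}\sum_{j\ne k}z_jX_j$ displays $X_k$ as a combination of the others, so $\mathcal{X}\setminus\{X_k\}$ already generates $M$ and $\mathcal{X}$ is not minimal.

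For the closing invariance claim, the equivalence (1) $\Leftrightarrow$ (3) just established shows that minimality of $\mathcal{X}$ is equivalent to $k$-linear independence of the images $\bar X_1,\ldots,\bar X_q$ in $\overline{M}$: a nontrivial $k$-linear dependence lifts to a homogeneous syzygy with a unit entry, and vice versa. Together with the opening remark that $\mathcal{X}$ generates $M$ iff these images span $\overline{M}$, minimal generating sets of $M$ are in bijection with $k$-bases of the graded $k$-vector space $\overline{M}$; all such bases have cardinality $\dim_k\overline{M}$, giving the asserted independence of $q$ from $\mathcal{X}$. There is no deep obstacle here, as the proposition is essentially graded Nakayama, but the one delicate point is the passage from arbitrary relations to homogeneous ones, which is exactly where the hypothesis $S_0=k$ (so that homogeneous elements of degree zero are either zero or units) earns its keep.
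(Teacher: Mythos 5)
Your argument is correct and is precisely the graded Nakayama reasoning that the paper itself does not write out but delegates entirely to the citation of Eisenbud, Section~20.1. One small point to tighten: in the direction ``not (1) $\Rightarrow$ not (3)'' the homogeneous syzygy $(s_1,\ldots,-1,\ldots,s_q)$ you exhibit need not be a row of the given matrix $Z$, so you should add that it is an $S$-linear combination of the rows of $Z$, whence not all entries in the $k$-th column of $Z$ can lie in $\mathfrak{m}$ (otherwise $-1\in\mathfrak{m}$), and a homogeneous entry outside $\mathfrak{m}$ is a unit.
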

\begin{proof}
The equivalence is a special case of the content of \cite[Section~20.1]{eis}.
\end{proof}

\begin{corol}
Let $M$ be a graded submodule of $S^{1\times r}$ of rank $r$. Then $M$ is free if and only if the cardinality of any minimal set of generators is $r$.
\end{corol}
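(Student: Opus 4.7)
The plan is to derive this corollary directly from Proposition~\ref{minimal}, treating the two implications separately. The essential tool on one side is the uniqueness of the cardinality of a minimal homogeneous generating set asserted by the proposition; on the other side is the standard fact that a surjection from a free module onto a rank-$r$ submodule of a free module, when the surjection is from $S^{1\times r}$, must be an isomorphism provided $S$ is a domain.

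For the forward implication, I would start from a homogeneous free basis $\{X_1,\ldots,X_r\}$ of $M$ and first verify that this basis is itself a minimal generating set: removing any $X_i$ would force $X_i$ to lie in the $S$-span of the remaining elements, contradicting $S$-linear independence of the basis. The last clause of Proposition~\ref{minimal} then yields that every minimal generating set of $M$ has exactly $r$ elements.

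For the converse, suppose a minimal generating set $\{X_1,\ldots,X_r\}$ has cardinality $r$, and consider the induced surjection $\pi\colon S^{1\times r}\twoheadrightarrow M$ sending the $i$-th standard basis vector to $X_i$, with kernel $N$. In the setting of the paper $S=\RR[x_1,\ldots,x_r]$ is an integral domain, so rank is well-defined via tensoring with the fraction field $K$. Tensoring the short exact sequence $0\to N\to S^{1\times r}\to M\to 0$ with $K$ produces a surjection $K^{1\times r}\twoheadrightarrow M\otimes_S K$ of $K$-vector spaces, both of $K$-dimension $r$ by the rank hypothesis; this surjection is therefore an isomorphism. Consequently $N\otimes_S K=0$, so $N$ is torsion; but $N$ is a submodule of the torsion-free module $S^{1\times r}$, so $N=0$ and $\pi$ is an isomorphism, exhibiting $M$ as free of rank $r$.

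I do not anticipate a serious obstacle: all the substantive work was already packaged into Proposition~\ref{minimal} (the graded version of \textsc{Nakayama}'s lemma), and the only care required is that the ambient ring $S$ be a domain so that the notion of rank behaves as expected under localization at $K$. In the arrangement context $S$ is a polynomial ring over $\RR$, so this holds automatically and the argument goes through cleanly.
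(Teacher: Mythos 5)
Your argument is correct and is exactly the justification the paper leaves implicit: the corollary is stated there without proof as an immediate consequence of Proposition~\ref{minimal}. Your forward direction (a free basis is a minimal generating set, then invoke the uniqueness of cardinality) and your converse (a surjection $S^{1\times r}\twoheadrightarrow M$ with $M$ of rank $r$ has torsion kernel inside the torsion-free module $S^{1\times r}$, hence is an isomorphism) are the intended reasoning, and you are right that the converse uses that $S$ is a domain, which holds in the arrangement setting $S=\RR[x_1,\ldots,x_r]$.
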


This corollary combined with the last condition of Proposition~\ref{minimal} suggest a simple algorithm to decide freeness of the graded module $M\leq S^{1\times r}$.

\algo{GetColumnIndependentUnitPositions}{$Z$}
{Determine the position of the ``column independent'' units in the matrix $Z$}
{A matrix $Z\in S^{p\times q}$.}
{A subset $K$ of $\{1,\ldots, q\}$.}
{
\item Set $J:=\{1,\ldots, q\}$ and $K:=\emptyset$.
\item for $i\in\{1,\ldots,p\}$ and $j\in J$ do: \\
if $Z_{ij}\in k^*$, i.e.\  is a unit, then \\
redefine $K:=K\cup \{j\}$, redefine $J:=\{l\in J\mid Z_{il} = 0 \}$, and break the $j$ loop.
\item return $K$ (after finishing the $i$-loop).
}

\algo{LessGenerators}{$X$}
{Compute a minimal set of homogenous\footnote{The algorithm can be used to reduce the number of generators of a non-graded submodule given by a non-homogenous matrix. But in that case the number of rows of the output matrix has no intrinsic meaning.} generators}
{$X\in S^{q\times r}$ with homogenous rows. The rows $X_1,\ldots,X_q$ of $X$ form a set of homogeneous generators of a graded submodule $M\leq S^{1\times r}$.}
{A submatrix of $X$ with some rows eventually deleted. The set of rows of this submatrix is a minimal generating set of the graded submodule $M\leq S^{1\times r}$}
{
\item \label{Z} Compute a matrix $Z$ of row syzygies of $X$ with homogeneous entries.
\item Compute $K := \mathbf{GetColumnIndependentUnitPositions}(Z)$.
\item if $K=\emptyset$ then return $X$.
\item Define $\widetilde X$ as the matrix with rows $(X_i)_{i\in \{1,\ldots, q\}\setminus K}$.
\item return $\mathbf{LessGenerators}(\widetilde X)$.
}

The graded submodule $M\leq S^{1\times r}$ of rank $r$ is free if and only if $\mathbf{LessGenerators}(X)$ has $r$ rows, i.e.\  is a square matrix.

\subsection{\texorpdfstring{Descending chains of free submodules ending with $D(\Ac)$}{Descending chains of free submodules ending with D(A)}}

Set $S=k[x_1,\ldots,x_r]$.
Let $\beta_1,\ldots,\beta_n\in S$ be of degree $1$,
$\Ac=\{\ker \beta_1,$ $\ldots,$ $\ker \beta_n\}$ $\subset$ $k^r$ be a central arrangement with a fixed order of hyperplanes, and $\Phi_\ell =: \Ac_0 \subset \Ac_1 \subset \cdots \subset \Ac_n := \Ac$ the ascending maximal chain of central subarrangements with $\Ac_j := \{\ker \beta_1,\ldots,\ker \beta_j\}$.

The following algorithm decides the freeness of $D(\Ac_j) \leq S^{1\times r}$ for all $j \leq n$: It returns $\mathsf{fail}$ if $D(\Ac_j)$ is not free for some $j$. Otherwise it constructs a free basis $(X^{(j)}_1,\ldots,X^{(j)}_r)$ of $D(\Ac_j)$ written in the free basis $(X^{(j-1)}_1,\ldots,X^{(j-1)}_r)$ of $D(\Ac_{j-1})$ for all $1 \leq j \leq n$, starting with the standard basis $(X^{(0)}_1,\ldots,X^{(0)}_r):=(\alpha_1,\ldots,\alpha_r)$ as the free basis of $S^{1\times r}= \Der(S)=D(\Phi_\ell)=D(\Ac_0)$. In other words, the $r \times r$-matrix $X^{(j)}$, with $X^{(j)}_i$ being the $i$-th row, describes the embedding of $D(\Ac_j)$ in $D(\Ac_{j-1})$. Hence, in case all $D(\Ac_j)$ are free, the algorithm constructs the descending maximal chain of free modules
\[
  S^{1\times r}=D(\Ac_0) > \cdots > D(\Ac_n)=D(\Ac)
\]
and returns the tuple of successive embeddings $(X^{(j)})_{j=1\ldots n}$. It follows that the total embedding of $D(\Ac)$ in $S^{1\times r}$ is the product matrix $T:=\prod_{j=1}^n X^{(j)}$, the rows of which form a free basis of $D(\Ac)$ expressed in the standard basis $(\alpha_1,\ldots,\alpha_r)$ of $S^{1\times r}$.

\algo{ConstructFreeChain}{$\Ac$}
{Compute the list of successive embeddings $(X^{(j)})_{j=1\ldots n}$}
{$\Ac=\{\ker \beta_1,\ldots,\ker \beta_n\}$ with fixed order of hyperplanes.}
{A list of matrices $(X^{(j)})_{j=1\ldots n}$ describing the successive embedding of $D(\Ac_j)$ in $D(\Ac_{j-1})$}
{
\item For all $j=1\ldots n$ compute the morphism $\phi_{H_j}: S^{1\times r} \to S/\langle \beta_j \rangle$ represented by the $r\times 1$-matrix $\phi_j$ (cf.~§\ref{as_a_kernel}).
\item Compute the matrix $X$ of relative row syzygies of $\mu:=\phi_{H_1}:S^{1\times r} \to S/\langle \beta_1 \rangle$ (cf.~§\ref{as_a_kernel}).
\item If $X^{(1)}:=\mathbf{LessGenerators}(X)$ is not a square matrix then return \textsf{fail}.
\item Set $t=I_r$, the identity matrix of rank $r$.
\item For $j=2, \ldots, n$ perform steps \ref{start} to \ref{stop}:
\item \label{start} Set $t := X^{(j-1)} \cdot t$. It is the $r\times r$-matrix representing the embedding $D(\Ac_{j-1})\equiv S^{1\times r} \to \Der(S) \equiv S^{1\times r}$.
\item Set $\mu:=t \cdot \phi_j$. It is the $r\times 1$-matrix representing the morphism $D(\Ac_{j-1}) \equiv S^{1\times r}\to S/\langle \beta_j \rangle$.
\item \label{rel_syz} Compute the matrix $X$ of relative row syzygies of $\mu$ (the rows of which form a generating set of $\operatorname{ker}\mu = D(\Ac_j) < D(\Ac_{j-1})$).
\item \label{stop} If $X^{(j)}:=\mathbf{LessGenerators}(X)$ is not a square matrix then return \textsf{fail}.
\item return $(X^{(j)})_{j=1\ldots n}$.
}

\begin{remar}
This algorithm has the advantage of computing relative syzygies of morphisms represented by \emph{one-column} matrices  $\mu$ (see step \ref{rel_syz}) as opposed to the algorithms in §\ref{as_a_kernel}. The rows of the product matrix $T:=\prod_{j=1}^n X^{(j)}$ form a basis of $D(\Ac)$ and another advantage of this algorithm is that it returns the $n$ much simpler factors $(X^{(j)})_{j=1\ldots n}$ instead of $T$ itself (cf.~\cite[Theorem~4.46]{OT}).

We succeeded to compute such a descending maximal chain of free modules $S^{1\times 8} = D(\Ac_0)> \cdots > D(\Ac_{120}) = D(\Ac_{8,1})$ for the arrangement $\Ac_{8,1}$ of type $E_8$ with the roots as in Figure~\ref{fig8_120} but sorted degree reverse-lexicographically. The first $8$ roots are then the standard basis vectors $(\alpha_1,\ldots,\alpha_8)$.

In particular, the deleted arrangement $\Ac_{119}=\Ac_{8,1}'$ is free. Since $E_8$ is free\footnote{And even inductively free by Thm.\ \ref{thm:cryarrindfree}.} by Thm.\ \ref{reffree} we obtain another proof for the freeness of the restricted arrangement $\Ac_{7,2} \cong \Ac_{8,1}''$ by Thm.\ \ref{adddel}.
\end{remar}

\appendix

\section{\texorpdfstring{A free basis of $D(\Ac_{7,2})$ and of $D(\Ac_{8,1})$}{A free basis of D(A\_{7,2}) and of D(A\_{8,1})}}

Here we shortly describe the computation of a free basis of $D(\Ac_{7,2})$ and of $D(\Ac_{8,1})=D(\Ac(E_8))$. The algorithms in Section~\ref{freeness_homalg} are implemented (see \cite{indcert}) using some packages of the \texttt{homalg} project \cite{homalg-project}, written in \textsf{GAP4} \cite{GAP4}. \texttt{homalg} used \textsc{Singular} \cite{singular311} as the \textsc{Gröbner} basis engine.

The matrix $X$ of generators of $D(\Ac_{7,2})$, computed as a matrix of relative syzygies of $f_{\Ac_{7,2}}$ modulo $t_{\Ac_{7,2}}$, is a 1.9 GB $17\times 7$-matrix. The matrix $Z$ of row syzygies of $X$ is a 178 MB $10\times 17$  matrix. $\mathbf{GetColumnIndependentUnitPositions}(Z)$ returned a subset of $\{1,$ $\ldots,$ $17\}$ of cardinality $10$. $\mathbf{LessGenerators}(X)$ deleted these 10 rows from $X$ an returned a 169 MB quadratic $7\times 7$-matrix $\widetilde{X}$. It follows that the seven homogenous rows of $\widetilde{X}$ form a free basis of $D(\Ac_{7,2})$. Their degrees are $(1, 7, 11, 13, 17, 19, 23)$. The computations took 742 hours, i.e.\  around 30 days, and dropped to 6 GB of RAM at the end of the computation.

Applying \textbf{ConstructFreeChain} to the $E_8$-arrangement took only $8$ hours and $30$ minutes but needed 130 GB of RAM. These basis computations were performed on an \textsf{Opteron}-$8356$ machine with 128 GB of RAM.
The free basis is given as the rows of an $8\times8$-matrix computed as the product of $120$ sparse matrices \cite{indcert}.

Please note that proving the inductive freeness and producing the certificates for \emph{all} sporadic arrangements only took about $5$ minutes on a usual PC as mentioned at the end of §\ref{E8_arr}.

\section{Exponents of the sporadic crystallographic arrangements}

In this appendix we list the exponents of all sporadic crystallographic arrangements
$\Ac_{r,m}$ of rank $r$ and number $m$ as numbered in \cite{p-CH10}.

\begin{tiny}
\begin{longtable}{@{}ll|l||ll|l||ll|l@{}}
$r$ & $m$ & $\exp\Ac_{r,m}$ & $r$ & $m$ & $\exp\Ac_{r,m}$ & $r$ & $m$ & $\exp\Ac_{r,m}$ \\
\hline
\endhead
\hline
\endfoot
3 & 1 & 1, 4, 5 & 3 & 26 & 1, 9, 10 & 4 & 1 & 1, 4, 5, 5\\
3 & 2 & 1, 4, 5 & 3 & 27 & 1, 9, 11 & 4 & 2 & 1, 4, 5, 7\\
3 & 3 & 1, 5, 5 & 3 & 28 & 1, 9, 11 & 4 & 3 & 1, 5, 5, 7\\
3 & 4 & 1, 5, 6 & 3 & 29 & 1, 9, 11 & 4 & 4 & 1, 5, 7, 8\\
3 & 5 & 1, 5, 6 & 3 & 30 & 1, 10, 11 & 4 & 5 & 1, 5, 7, 9\\
3 & 6 & 1, 5, 7 & 3 & 31 & 1, 11, 13 & 4 & 6 & 1, 5, 7, 11\\
3 & 7 & 1, 5, 7 & 3 & 32 & 1, 11, 13 & 4 & 7 & 1, 7, 8, 9\\
3 & 8 & 1, 5, 7 & 3 & 33 & 1, 11, 13 & 4 & 8 & 1, 7, 9, 11\\
3 & 9 & 1, 5, 7 & 3 & 34 & 1, 11, 13 & 4 & 9 & 1, 7, 11, 11\\
3 & 10 & 1, 6, 7 & 3 & 35 & 1, 12, 13 & 4 & 10 & 1, 7, 11, 13\\
3 & 11 & 1, 7, 7 & 3 & 36 & 1, 12, 13 & 4 & 11 & 1, 7, 11, 13\\
3 & 12 & 1, 7, 8 & 3 & 37 & 1, 13, 13 & 5 & 1 & 1, 4, 5, 7, 8\\
3 & 13 & 1, 7, 8 & 3 & 38 & 1, 13, 13 & 5 & 2 & 1, 5, 7, 8, 9\\
3 & 14 & 1, 7, 9 & 3 & 39 & 1, 13, 13 & 5 & 3 & 1, 5, 7, 9, 11\\
3 & 15 & 1, 7, 9 & 3 & 40 & 1, 13, 14 & 5 & 4 & 1, 7, 9, 11, 13\\
3 & 16 & 1, 7, 9 & 3 & 41 & 1, 13, 14 & 5 & 5 & 1, 7, 11, 13, 14\\
3 & 17 & 1, 8, 9 & 3 & 42 & 1, 13, 14 & 5 & 6 & 1, 7, 11, 13, 17\\
3 & 18 & 1, 8, 9 & 3 & 43 & 1, 13, 15 & 6 & 1 & 1, 4, 5, 7, 8, 11\\
3 & 19 & 1, 9, 9 & 3 & 44 & 1, 13, 15 & 6 & 2 & 1, 5, 7, 9, 11, 13\\
3 & 20 & 1, 7, 11 & 3 & 45 & 1, 13, 15 & 6 & 3 & 1, 7, 11, 13, 14, 17\\
3 & 21 & 1, 9, 9 & 3 & 46 & 1, 13, 16 & 6 & 4 & 1, 7, 11, 13, 17, 19\\
3 & 22 & 1, 9, 9 & 3 & 47 & 1, 13, 17 & 7 & 1 & 1, 5, 7, 9, 11, 13, 17\\
3 & 23 & 1, 7, 11 & 3 & 48 & 1, 13, 17 & 7 & 2 & 1, 7, 11, 13, 17, 19, 23\\
3 & 24 & 1, 8, 11 & 3 & 49 & 1, 16, 17 & 8 & 1 & 1, 7, 11, 13, 17, 19, 23, 29\\
3 & 25 & 1, 9, 10 & 3 & 50 & 1, 17, 19 & & &
\end{longtable}
\end{tiny}

\providecommand{\bysame}{\leavevmode\hbox to3em{\hrulefill}\thinspace}
\providecommand{\MR}{\relax\ifhmode\unskip\space\fi MR }

\providecommand{\MRhref}[2]{%
  \href{http://www.ams.org/mathscinet-getitem?mr=#1}{#2}
}
\providecommand{\href}[2]{#2}

\end{document}